\newcommand{\R}{\mathbb{R}}
\newcommand{\N}{\mathbb{N}}
\newcommand{\E}{\mathbb{E}}
\newtheorem{theorem}{Theorem}
\newtheorem{lemma}[theorem]{Lemma}
\newtheorem{example}{Example}
\newtheorem{definition}{Definition}
\newtheorem{remark}{Remark}
\newcommand{\Om}{\Omega}
\newcommand{\lb}{\lambda}
\newcommand{\sm}{\setminus}
\newcommand{\sq}{\subseteq}
\newcommand{\ov}{\overline}
\newcommand{\vps}{\varepsilon}
\newcommand{\bp}{\begin{proof}}
\newcommand{\ep}{\end{proof}}
\begin{document}
\title{On efficiency and localisation for the torsion function}

\author{{M. van den Berg} \\
School of Mathematics, University of Bristol\\
Fry Building, Woodland Road\\
Bristol BS8 1UG, United Kingdom\\
\texttt{mamvdb@bristol.ac.uk}\\
\\
{D. Bucur}\\
Laboratoire de Math\'ematiques, Universit\'e Savoie Mont Blanc \\
UMR CNRS  5127\\
Campus Scientifique,
73376 Le-Bourget-Du-Lac, France\\
\texttt{dorin.bucur@univ-savoie.fr}\\
\\
{T. Kappeler}\\
Institut f\"ur Mathematik, Universit\"at Z\"urich\\
 Winterthurerstrasse 190,
CH-8057 Z\"urich, Switzerland\\
 \texttt{thomas.kappeler@math.uzh.ch}}
\date{20 April 2021}\maketitle
\vskip 3truecm \indent
\begin{abstract}\noindent
We consider the torsion function for the Dirichlet Laplacian $-\Delta$, and for the Schr\"odinger operator $- \Delta + V$ on an open set $\Omega\subset \R^m$ of finite Lebesgue measure
$0<|\Omega|<\infty$ with a real-valued, non-negative, measurable potential $V.$
We investigate the efficiency and the phenomenon of localisation for the torsion function, and their interplay with the geometry of the first Dirichlet eigenfunction.

\end{abstract}
\medskip \noindent \ \ \ \   { Mathematics Subject
Classification (2000)}: 35J05, 35J25, 35K05, 35P99.
\begin{center} \textbf{Keywords}: Torsion function, first Dirichlet eigenfunction, Schr\"odinger operator, Dirichlet boundary condition, localisation, efficiency.
\end{center}

\section{Introduction and main results \label{sec1}}
Let $\Omega$ be an open set in $\R^m$, with finite Lebesgue measure, $0 < |\Omega| < \infty$, and boundary $\partial\Omega$, and let
\begin{equation*}
L=-\Delta+V,
\end{equation*}
be the Schr\"odinger operator acting in $L^2(\Omega)$ with the potential  $V:\Omega\rightarrow \R^+,\, \R^+= [0,\infty) $ being measurable.
The torsion function for $\Om$ is the unique solution of
\begin{equation*}
-\Delta v=1,\, \qquad v\in H_0^{ 1 }(\Omega).
\end{equation*}
It is denoted by $v_{\Om}$, and is also referred to as the torsion function for the Dirichlet Laplacian.
The function $v_{\Omega}$ is non-negative, pointwise increasing in $\Omega$, and satisfies,
\begin{equation}\label{e4}
\lambda_1(\Omega)^{-1}< \|v_{\Omega}\|_{\infty}\le
(4+3m\log 2)\lambda_1(\Omega)^{-1},
\end{equation}
where
\begin{equation*}
\lambda_1(\Omega)=\inf_{\varphi\in
H_0^1(\Omega)\setminus\{0\}}\frac{\Vert \nabla\varphi\Vert_2^2}{
\Vert \varphi\Vert_2^2},
\end{equation*}
is the first eigenvalue of the Dirichlet Laplacian. Here, and throughout the paper, $\Vert\cdot\Vert_p$ denotes the standard $L^p$ norm, $1\le p\le \infty$.
Since $|\Om|<\infty$ the first eigenvalue is bounded away from $0$ by the Faber-Krahn inequality.
The $m$-dependent constant in the right-hand side of \eqref{e4} has
subsequently been improved (\cite{GS},\cite{HV}). We denote the sharp constant by
\begin{equation}\label{e5}
\mathfrak{c}_m=\sup\{\lambda_1(\Omega)\|v_{\Omega}\|_{\infty}:\Omega\,\textup{ open in }\,\R^m,\,0<|\Omega|<\infty\}.
\end{equation}

More generally, the equation $Lv = 1$ also has a unique solution $v_{\Omega, V}\in H^1_0(\Omega)$, referred to as
the torsion function for $L$.

In this paper we study the {\em efficiency} of the torsion function of Schr\"odinger operators, and study the phenomenon of {\em localisation}.
The notion of efficiency, or mean to max ratio, goes back to \cite{ps} and \cite {sperb}, where it was introduced for the first Dirichlet eigenfunction.
It can be viewed as a (rough) measure of localisation.
The mean to max ratio for the torsion function for bounded, open, convex sets in Euclidean space was studied in \cite{DPGGLB} in the more general context of the $p$-torsional rigidity.
The phenomenon of localisation of eigenfunctions of Schr\"odinger operators is a prominent and very active research area and has important applications in the applied sciences.
The literature is extensive. See for example the review paper \cite{grebenkov}. It was discovered in \cite{DDJM1} and \cite{DDJM2}, and the references therein, that under appropriate conditions, $v_{\Om,V}^{-1}$ can be used for approximating eigenvalues and eigenfunctions of $L$.
It raises the question as to whether under appropriate assumptions, the phenomenon of localisation can also be observed for the torsion function of Schr\"odinger operators,
and suggests to investigate the interplay between the localisation of the torsion function and the one of the first Dirichlet eigenfunction.

The main results of this papers can be described in an informal way as follows.
Theorem \ref{the1} compares the efficiency of $v_{\Omega,V}$ with the one of $v_\Omega$ under a variety of hypotheses.
In addition it shows that for any given $\Omega$,  the efficiency for $v_{\Omega,V}$ can be arbitrarily close to $1$.
Theorem \ref{the7} asserts that the efficiency for the first eigenfunction of the Dirichlet Laplacian can be arbitrarily close to $1$.
Among other results, Theorem \ref{the9} provides a quantitative estimate, showing that in case the efficiency for the first eigenfunction of the Dirichlet Laplacian
is close to one, the corresponding first eigenvalue is large. Finally, Theorem \ref{the4} shows that localisation for the torsion function
of the Dirichlet Laplacian implies localisation for the first eigenfunction of this operator.

\begin{definition}\label{def1}
\begin{enumerate}\item[\textup{(i)}] Let $\Omega$ be an open set in $\R^m$
with $0 < |\Omega | < \infty$, and let $f:\Omega\rightarrow [0,\infty)$, with $0<\|f\|_{\infty}<\infty$. The mean to max ratio of $f$  is the real number $\frac{\|f\|_1}{|\Om|\|f\|_{\infty}}$.
\item[\textup{(ii)}] If $v_{\Om,V}$ is the torsion function for $L$, then its efficiency is its mean to max ratio,
$$ \Phi(\Omega,V)=\frac{\|v_{\Om,V}\|_1}{|\Om|\|v_{\Om,V}\|_{\infty}}.$$
\end{enumerate}
\end{definition}
If $V=0$, then $\Phi(\Omega,0)$ is denoted by $\Phi(\Omega),$ which coincides with the definition in \cite{DPGGLB}.

Our first result concerns the comparison of $\Phi(\Omega,V)$ and $\Phi(\Omega)$. In \cite{HLP} it was shown that if $m\ge 2$, then
\begin{equation*}
\inf\{\Phi(\Omega):\Omega\subset \R^m,\,\Omega\,\,\,\textup{open},\, 0 < |\Omega|<\infty\}=0,
\end{equation*}
and
\begin{equation}\label{e17a}
\sup\{\Phi(\Omega):\Omega\subset \R^m,\,\Omega\,\,\,\textup{open},\, 0 < |\Omega|<\infty\}=1.
\end{equation}
The analogous result for Schr\"odinger operators is stated under (ii) of the theorem below.

\begin{theorem}\label{the1}
\begin{enumerate}
\item[\textup{(i)}]
If $\Omega$ is an open set in $\R^m$ with $0<|\Omega|<\infty$, and if $V: \Omega \to \R^+$ is measurable with
$0\le V \le c,\,c>0$, then
\begin{equation}\label{e7}
\Phi(\Omega,V)\le 2^{2(3m+4)c/\lambda_1(\Omega)}\frac{8c+\lambda_1(\Omega)}{\lambda_1(\Omega)}\bigg(\frac{8c+\lambda_1(\Omega)}{8c}\bigg)^{8c/\lambda_1(\Omega)}\Phi(\Omega),
\end{equation}
and
\begin{equation}\label{e8}
\Phi(\Omega,V)\ge 2^{-2(3m+4)c/\lambda_1(\Omega)}\frac{\lambda_1(\Omega)}{8c+\lambda_1(\Omega)}\bigg(\frac{8c}{8c+\lambda_1(\Omega)}\bigg)^{8c/\lambda_1(\Omega)}\Phi(\Omega).
\end{equation}
Furthermore for fixed $\Omega$ the right-hand sides of \eqref{e7} and \eqref{e8} converge to $\Phi(\Omega)$ as $c\downarrow 0$.
\item[\textup{(ii)}]
Let $(\Omega_n)$ be a sequence of open sets in $\R^m$ with $0 < |\Omega_n|<\infty$, and let $V_n:\Omega_n\to \R^+$ be a sequence of measurable functions. If there exists $\eta<\infty$ such that
$\sup_{n\in\N}\frac{\| V_n \|_{\infty}}{\lambda_1(\Omega_n)} \le \eta,$
then $\lim_{n\rightarrow\infty}\Phi(\Omega_n,V_n)=0$ if and only if $\lim_{n\rightarrow\infty}\Phi(\Omega_n)=0$.
\item[\textup{(iii)}]
If $\Omega$ is a fixed open set in $\R^m$ with $0 < | \Omega | < \infty$, then
\begin{equation}\label{e51}
\sup\{\Phi(\Omega,V): ( V:\Omega\rightarrow \R^+, \textup{measurable})\}=1,
\end{equation}
and
\begin{equation}\label{e52}
\inf\{\Phi(\Omega,V): ( V:\Omega\rightarrow \R^+, \textup{measurable})\}=\Phi(\Omega).
\end{equation}
In fact,
\begin{equation}\label{e50}
1>\Phi(\Omega,c) \ge 1-\frac{2^{(m+4)/2}}{|\Omega|}\int_{\Omega}dx\,e^{-c^{1/2}d_{\Omega}(x)/2}, \qquad \forall c\ge 0,
\end{equation}
where $d_\Omega : \Omega \to \R^+$ is the distance to the boundary function,
\begin{equation*}
d_{\Omega}(x)=\min\{|x-y|:y\in \R^m\setminus\Omega\}.
\end{equation*}
\end{enumerate}
\end{theorem}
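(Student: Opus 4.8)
The plan is to treat the three parts in sequence, with parts (i) and (iii) carrying the analytic weight and (ii) following almost immediately from (i). For part (i), the natural idea is to compare $v_{\Omega,V}$ with $v_\Omega$ via the maximum principle and a suitable spectral/heat-semigroup representation. Writing $v_{\Omega,V}(x)=\int_0^\infty (e^{-tL}\mathbf 1)(x)\,dt$ and $v_\Omega(x)=\int_0^\infty (e^{t\Delta}\mathbf 1)(x)\,dt$, the bound $0\le V\le c$ gives the pointwise semigroup comparison $e^{-ct}(e^{t\Delta}f)\le e^{-tL}f\le e^{t\Delta}f$ for $f\ge 0$ (Trotter product / Feynman–Kac), which already controls $v_{\Omega,V}$ from above by $v_\Omega$ and from below by $e^{-ct}$-weighted versions. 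To get the sharper constants appearing in \eqref{e7}–\eqref{e8}, I expect one must split the time integral at a threshold $t_0$ comparable to $\lambda_1(\Omega)^{-1}$, use the crude bound on $[0,t_0]$ and the spectral decay $\|e^{t\Delta}\mathbf 1\|_\infty \lesssim e^{-\lambda_1 t}$ together with the ultracontractivity estimate behind \eqref{e4} (the $2^{3m+4}$-type constant) on $[t_0,\infty)$, then optimise in $t_0$; the factor $(\,(8c+\lambda_1)/(8c)\,)^{8c/\lambda_1}$ has exactly the shape of such an optimisation. Both the numerator $\|v_{\Omega,V}\|_1$ and the denominator $\|v_{\Omega,V}\|_\infty$ must be sandwiched between multiples of $\|v_\Omega\|_1$ and $\|v_\Omega\|_\infty$, so $\Phi(\Omega,V)/\Phi(\Omega)$ is squeezed between the reciprocal products of these constants; letting $c\downarrow 0$ all constants tend to $1$, giving the last assertion of (i).

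Part (ii) is then a soft consequence: under $\|V_n\|_\infty/\lambda_1(\Omega_n)\le\eta$ the multiplicative constants in \eqref{e7}–\eqref{e8} are bounded above and below by constants depending only on $m$ and $\eta$, uniformly in $n$, so $\Phi(\Omega_n,V_n)$ and $\Phi(\Omega_n)$ are comparable up to fixed factors, and one tends to $0$ iff the other does.

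Part (iii) is the most interesting. The upper bound $\Phi(\Omega,V)<1$ for every admissible $V$ holds because $v_{\Omega,V}$ is real-analytic (hence non-constant) in the interior where it is positive, so it cannot attain its maximum on a set of full measure; equivalently $\|v_{\Omega,V}\|_1<|\Omega|\,\|v_{\Omega,V}\|_\infty$ strictly. For \eqref{e50} I would take $V\equiv c$ constant and compare $v_{\Omega,c}$ with $v_{\R^m\!,\,c}$, the torsion function on all of $\R^m$, which is simply the constant $1/c$. By the maximum principle $v_{\Omega,c}\le 1/c$, and the defect $1/c - v_{\Omega,c}$ is $L$-harmonic with boundary value $1/c$, so $1/c - v_{\Omega,c}(x) = c^{-1}\,\Pa_x(\text{Brownian motion killed at rate }c\text{ exits }\Omega)$. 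Hence $c\,v_{\Omega,c}(x)=\E_x[e^{-c\tau_{\Omega}}]$ where $\tau_\Omega$ is the exit time, and one bounds $\E_x[e^{-c\tau_\Omega}]$ below using $\tau_\Omega \ge$ the exit time of a ball of radius $d_\Omega(x)$ around $x$, together with a Gaussian lower bound for that exit time, producing the factor $e^{-c^{1/2}d_\Omega(x)/2}$ up to the dimensional constant $2^{(m+4)/2}$. Since $\|v_{\Omega,c}\|_\infty\le 1/c$, we get $\|v_{\Omega,c}\|_1\ge c^{-1}\int_\Omega(1-2^{(m+4)/2}e^{-c^{1/2}d_\Omega/2})$, and dividing by $|\Omega|\|v_{\Omega,c}\|_\infty$ yields \eqref{e50}. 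Finally, dominated convergence shows the right-hand side of \eqref{e50} tends to $1$ as $c\to\infty$ (since $d_\Omega(x)>0$ a.e.), giving the supremum $=1$ in \eqref{e51}; and the infimum $=\Phi(\Omega)$ in \eqref{e52} follows from the $c\downarrow 0$ limit in part (i) applied to $V\equiv c$, since then $\Phi(\Omega,c)\to\Phi(\Omega)$ and by monotonicity/continuity considerations $\Phi(\Omega,V)\ge\Phi(\Omega)$ cannot be beaten — more carefully, one argues that any measurable $V\ge 0$ gives $\Phi(\Omega,V)\ge \Phi(\Omega)$ via the same semigroup comparison with $c$ replaced by the pointwise potential, and the value $\Phi(\Omega)$ is approached along $V\equiv c$, $c\downarrow 0$.

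The main obstacle I anticipate is tracking the explicit dimensional constants: getting precisely $2^{2(3m+4)c/\lambda_1}$ and the delicate exponent $(\,(8c+\lambda_1)/(8c)\,)^{8c/\lambda_1}$ in (i) requires the right choice of splitting time and careful use of the ultracontractivity bound underlying \eqref{e4}, and similarly the constant $2^{(m+4)/2}$ in \eqref{e50} demands a sharp-enough Gaussian exit-time estimate for a ball. The structural arguments (maximum principle, Feynman–Kac, analyticity for $\Phi<1$, dominated convergence for the limits) are routine; it is the quantitative bookkeeping that will be the real work.
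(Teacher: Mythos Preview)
Your approach to parts (i) and (ii) is essentially the paper's: heat-kernel representation, the Feynman--Kac comparison $e^{-ct}p_\Omega \le p_{\Omega,V} \le p_\Omega$ for $0 \le V \le c$, splitting the $t$-integral at a threshold, invoking the off-diagonal bound $p_\Omega(x,y;t) \le (4\pi t)^{-m/2} 2^{m/4} e^{-t\lambda_1(\Omega)-|x-y|^2/(8t)}$ on the tail, and optimising the threshold. The paper packages this as a pointwise lower bound $v_{\Omega,c}(x) \ge K(c,\lambda_1)\,v_\Omega(x)$ (its Lemma~\ref{lem2}) and then sandwiches $\|v_{\Omega,V}\|_1$ and $\|v_{\Omega,V}\|_\infty$ between the corresponding quantities for $v_\Omega$ and $v_{\Omega,c}$, exactly as you describe. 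For \eqref{e50} the paper uses the heat-kernel lower bound $\int_\Omega p_\Omega(x,y;t)\,dy \ge 1 - 2^{(m+2)/2} e^{-d_\Omega(x)^2/(8t)}$ rather than your exit-time formulation, but these are equivalent; note however that your identity should read $c\,v_{\Omega,c}(x) = 1 - \E_x[e^{-c\tau_\Omega}]$, and one needs an \emph{upper} bound on $\E_x[e^{-c\tau_\Omega}]$ (which $\tau_\Omega \ge \tau_{B(x;d_\Omega(x))}$ does provide), so your intermediate sentence has the direction reversed even though the conclusion is correct.

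The genuine gap is in your argument for \eqref{e52}. You assert that ``any measurable $V\ge 0$ gives $\Phi(\Omega,V)\ge\Phi(\Omega)$ via the same semigroup comparison''. But Feynman--Kac only yields the pointwise inequality $v_{\Omega,V}\le v_\Omega$, which lowers \emph{both} $\|v_{\Omega,V}\|_1$ and $\|v_{\Omega,V}\|_\infty$; it says nothing about their ratio. In fact taking $V=M\,{\bf 1}_{\Omega'}$ with $\Omega'\subsetneq\Omega$ and $M$ large makes $v_{\Omega,V}$ essentially the torsion function of $\Omega\setminus\Omega'$ extended by nearly zero, and its mean-to-max ratio taken relative to the full measure $|\Omega|$ can be strictly smaller than $\Phi(\Omega)$. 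The paper's own justification of \eqref{e52} is the single clause ``follows from \eqref{e8} and (i)'', and what \eqref{e8} actually delivers is only $\Phi(\Omega,V)\ge K(c)\Phi(\Omega)$ with $K(c)<1$ for $0\le V\le c$, together with $K(c)\to 1$ as $c\downarrow 0$. Neither your proposed mechanism nor a direct reading of \eqref{e8} produces the uniform lower bound $\Phi(\Omega,V)\ge\Phi(\Omega)$ for all measurable $V\ge 0$ that your argument claims.
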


\medskip

We denote the spectrum of $L$ with Dirichlet boundary conditions by $$\{\lambda_1(\Omega,V)\le \lambda_2(\Omega,V)\le \cdots\},$$
accumulating at infinity only, and choose a corresponding $L^2$-orthonormal basis of eigenfunctions $\{\varphi_{1,\Omega,V}, \varphi_{2,\Omega,V},\cdots\}.$
If the first Dirichlet eigenvalue $\lambda_1(\Omega,V)$ of $L$ has multiplicity $1$, then its corresponding eigenspace is one-dimensional, and $\varphi_{1,\Omega,V}$ is uniquely defined up to a sign.
Since  $\varphi_{1,\Omega,V}$ does not change sign we may choose  $\varphi_{1,\Omega,V}>0.$ In that case we denote the efficiency, or mean to max ratio, of $\varphi_{1, \Omega, V}$ by
\begin{equation*}
E(\Omega,V)=\frac{\|\varphi_{1, \Omega, V}\|_1}{| \Omega | \, \| \varphi_{1, \Omega, V} \|_{\infty}} .
\end{equation*}
If $V=0$, then $E(\Omega,0)$ is denoted by $E(\Omega),$ which coincides with the definition on p.92 in \cite{sperb}. See also \cite{vdBBDPG}.
We note that if $\Omega$ is connected, then $\lambda_1(\Omega)$ is simple.

By \cite[Theorem 1.2]{vdBFNT} and \cite[Theorem 1]{vdB1} it is possible for $m\ge 2$ to construct, for any $\varepsilon\in(0,1)$, an open connected set $\Omega_{\varepsilon}\subset \R^m$ with $0<|\Om_{\varepsilon}|<\infty$ such that both
$$ \frac{\lambda_1(\Omega_{\varepsilon})\|v_{\Omega_{\varepsilon}}\|_1}{|\Omega_{\varepsilon}|}>1-\varepsilon,$$
and
$$\lambda_1(\Omega_{\varepsilon})\|v_{\Omega_{\varepsilon}}\|_{\infty}<1+\varepsilon.$$
This implies that
$$\Phi(\Omega_{\varepsilon}) \ge \frac{1-\varepsilon}{1+\varepsilon},\qquad\forall\varepsilon\in(0,1),$$
which in turn implies \eqref{e17a}. Given $\varepsilon\in (0,1),$ we were unable to construct a set $\Om_{\varepsilon}$ such that $E(\Omega_{\varepsilon})>1-\varepsilon$. Nevertheless we have the following:
\begin{theorem}\label{the7}
If $m\ge 2$, then
\begin{equation*}
\sup\{E(\Omega):\Omega\subset \R^m,\,\Omega\,\,\,\textup{open and connected},\,0 < |\Omega|<\infty\}=1.
\end{equation*}
\end{theorem}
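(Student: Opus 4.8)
The plan is to prove that $E(\Omega)$ can be made arbitrarily close to $1$; that $E(\Omega)<1$ for every admissible $\Omega$ is immediate, since on a connected $\Omega$ the (positive) eigenfunction $\varphi_{1,\Omega}$ vanishes on $\partial\Omega$ and is not a.e.\ constant, so $\|\varphi_{1,\Omega}\|_1<|\Omega|\,\|\varphi_{1,\Omega}\|_{\infty}$. Unlike for $\Phi(\Omega)$, one cannot argue by making $\varphi_{1,\Omega}$ nearly constant on a genuinely $m$-dimensional bulk: that is incompatible with $-\Delta\varphi_{1,\Omega}=\lambda_1(\Omega)\varphi_{1,\Omega}$, which (together with Theorem \ref{the9}) forces $\lambda_1$ to be large. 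So I would pass through Schr\"odinger operators on one fixed set, and then realise the potential by geometry.

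Step 1: a flat ground state produced by a potential. Let $B_r\subset\R^m$ be the open ball of radius $r$ about the origin. For small $\delta>0$ choose a smooth radial $\psi_\delta>0$ on $B_1$ with $\psi_\delta\equiv 1$ on $B_{1-\delta}$, radially decreasing on the shell $B_1\sm B_{1-\delta}$, vanishing \emph{quadratically} on $\partial B_1$, and arranged so that $\Delta\psi_\delta/\psi_\delta$ is bounded below on $B_1$ (it tends to $+\infty$ at $\partial B_1$ by the quadratic vanishing, and is bounded on $B_1\sm\partial B_1$ for fixed $\delta$). Put $K_\delta:=-\inf_{B_1}\big(\Delta\psi_\delta/\psi_\delta\big)\in[0,\infty)$ and $W_\delta:=K_\delta+\Delta\psi_\delta/\psi_\delta\ge 0$. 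Then $W_\delta$ is measurable and nonnegative, and $(-\Delta+W_\delta)\psi_\delta=K_\delta\psi_\delta$ with $\psi_\delta>0$; since a positive $H^1_0$-solution of an eigenvalue equation is a first eigenfunction, $\psi_\delta=\varphi_{1,B_1,W_\delta}$ and $\lambda_1(B_1,W_\delta)=K_\delta$ (note $K_\delta\to\infty$ as $\delta\downarrow0$, consistent with Theorem \ref{the9}). As $\|\psi_\delta\|_\infty=1$ we get
\begin{equation*}
E(B_1,W_\delta)=\frac{\|\psi_\delta\|_1}{|B_1|}\ \ge\ (1-\delta)^m\ \xrightarrow[\delta\downarrow0]{}\ 1 .
\end{equation*}
Truncating $W_\delta$ at a large finite height changes this by $o(1)$, so one may in addition take $W_\delta\in L^\infty(B_1)$. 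In particular $\sup\{E(B_1,W):W:B_1\to\R^+\text{ measurable}\}=1$ (while each $E(B_1,W)<1$); morally $W_\delta$ is a hole density that is a positive constant in the bulk, drops to $0$ in a sub-shell, and diverges near $\partial B_1$.

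Step 2: realising $W_\delta$ by a perforated domain. Here I would invoke the homogenisation theory of perforated domains / relaxed Dirichlet problems (Cioranescu--Murat, Marchenko--Khruslov, Dal Maso): one can choose $\Om_{\delta,n}=B_1\sm K_{\delta,n}$, with $K_{\delta,n}$ a finite union of tiny closed balls whose radii are tuned so that the homogenised "strange term'' equals $W_\delta\,dx$, such that $\Om_{\delta,n}$ is open, connected (this is exactly where $m\ge2$ is used: removing small balls from $B_1$ leaves a connected set when $m\ge2$, whereas the claim is false for $m=1$), and $0<|\Om_{\delta,n}|<\infty$, and the Dirichlet forms of $-\Delta$ on $\Om_{\delta,n}$ converge in the Mosco sense to the form of $-\Delta+W_\delta$ on $B_1$. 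Then $\lambda_1(\Om_{\delta,n})\to\lambda_1(B_1,W_\delta)$ and $\varphi_{1,\Om_{\delta,n}}\to\psi_\delta$ in $L^2(B_1)$, hence $\|\varphi_{1,\Om_{\delta,n}}\|_1\to\|\psi_\delta\|_1$; and, using uniform elliptic bounds together with a maximum-principle argument away from $\partial B_1$ (noting that $\psi_\delta$ attains its maximum on $B_{1-\delta}$, where $W_\delta$ is bounded uniformly in $n$, so no concentration near $\partial B_1$ is possible), $\|\varphi_{1,\Om_{\delta,n}}\|_\infty\to\|\psi_\delta\|_\infty$. Therefore $E(\Om_{\delta,n})\to E(B_1,W_\delta)$ as $n\to\infty$. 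Given $\varepsilon\in(0,1)$, pick $\delta$ with $E(B_1,W_\delta)>1-\varepsilon/2$ and then $n$ with $E(\Om_{\delta,n})>1-\varepsilon$; since $\Om_{\delta,n}$ is admissible and $E(\Om_{\delta,n})<1$, the supremum equals $1$ (and is not attained).

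The main obstacle is Step 2: matching the prescribed $W_\delta$ by a hole pattern (which is what forces the truncation to a bounded potential), keeping the perforated set connected, and — the most delicate point — the convergence of the sup norms $\|\varphi_{1,\Om_{\delta,n}}\|_\infty$, which requires controlling the eigenfunctions near the holes (through the two-scale structure of the homogenisation limit) and excluding a spurious boundary concentration. By contrast, the construction of $\psi_\delta$ and the verification that $\Delta\psi_\delta/\psi_\delta$ is bounded below are routine, as is the estimate $E(B_1,W_\delta)\ge(1-\delta)^m$.
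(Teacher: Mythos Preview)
Your two-step plan---manufacture a Schr\"odinger ground state that is nearly constant on most of $B_1$, then realise the potential as the $\gamma$-limit of perforated subdomains---is exactly the strategy the paper follows, including the use of Dal Maso--Mosco/Cioranescu--Murat approximation and the observation that connectedness of the perforated set requires $m\ge 2$. The paper's Step~1 is slightly cleaner than yours: rather than prescribing a smooth $\psi_\delta$ and extracting $W_\delta=K_\delta+\Delta\psi_\delta/\psi_\delta$ (which blows up at $\partial B_1$ and forces you to truncate), it takes the first Dirichlet--Neumann eigenfunction $u_\varepsilon$ on the annulus $B_1\setminus\overline{B}_{1-\varepsilon}$ and extends it by the constant $u_\varepsilon|_{\partial B_{1-\varepsilon}}$ into $B_{1-\varepsilon}$; the resulting $v_\varepsilon$ satisfies $-\Delta v_\varepsilon+\mu v_\varepsilon=\lambda_\varepsilon v_\varepsilon$ with the \emph{bounded} measure $\mu=\lambda_\varepsilon\,{\bf 1}_{\overline{B}_{1-\varepsilon}}$, so no truncation is needed. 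For the delicate point you flag---convergence of $\|u_n\|_\infty$---the paper's argument is precisely a maximum-principle/subharmonicity one: from the uniform bound $-\Delta u_n\le M$ one gets that $x\mapsto u_n(x)+M|x-x_n|^2/(2m)$ is subharmonic, and averaging over a small ball around a maximum point $x_n$ yields $\limsup_n\|u_n\|_\infty\le\|v_\varepsilon\|_\infty+M\delta^2/(2m)$ for every $\delta>0$.
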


\medskip

By examining the proof of Theorem \ref{the7} in Section \ref{sec4} we see that for any $m\ge 2$, and
$\varepsilon \in (0,1)$ there exists an open, bounded and connected set $\Omega_{\varepsilon}\subset \R^m$
such that (i) $E(\Omega_{\varepsilon})\ge 1-\varepsilon$, and (ii) $\lambda_1(\Omega_{\varepsilon})|\Omega_{\varepsilon}|^{2/m} $
is  large for $\varepsilon$ small. In Theorem \ref{the9} below we show that this is a general phenomenon. That is if $\Omega$ is any open
and connected set in $ \R^m, \,m\ge 2,$ with $0<|\Om|<\infty$ such $E(\Om)$ is close to $1$, then
the eigenfunction is close to its maximum on most of $\Omega$, and
 $\lambda_1(\Omega)|\Omega|^{2/m}$ is large.  We have a similar phenomenon for the torsion function. Throughout we denote by $B(p;R)=\{x\in\R^m:|x-p|<R\}$ the open ball with centre $p$ and radius $R$. We put $B_R=B(0;R)$, and $\omega_m=|B_1|$. For $\Omega$ open with
$0<|\Omega|<\infty$, and $u\in L^1(\Omega),$
$$\fint_{\Omega}u := \frac{1}{|\Omega|} \int_{\Omega} u.$$

\begin{theorem}\label{the9}
Let $m\ge 2$ and let $\Omega$ be a non-empty open set in $\R^m$ with finite Lebesgue measure, $|\Omega|<\infty$.
\begin{itemize}
\item[\textup{(i)}] If $u\in H_0^1(\Omega)\cap L^{\infty}(\Omega)$, $\|u\|_{\infty}>0$, and if
\begin{equation*}
\fint_{\Omega}u\ge \frac{2\|u\|_{\infty}}{m+2},\,
\end{equation*}
then
\begin{equation}\label{p22}
\bigg(\frac{\omega_m}{|\Omega |}\bigg)^{(m-2)/m}
\bigg(\|u\|_{\infty}-\fint_{\Omega}u\bigg)\int_{\Omega}|\nabla u|^2\ge \frac{4m^2}{(m+2)^2}\omega_m\|u\|^3_{\infty}.
\end{equation}
Equality occurs if and only if $\Omega$ is a ball, and $u$ is a multiple of the torsion function.
\item[\textup{(ii)}]
\begin{equation}\label{p22a}
\|v_{\Omega}\|_{\infty}\le
\frac{(m+2)^2}{4m^2} \bigg(\frac{|\Omega|}{\omega_m}\bigg)^{2/m}(1-\Phi(\Omega)),
\end{equation}
\begin{equation}\label{p22b}
\lambda_1(\Omega)\ge
\frac{4m^2}{(m+2)^2}  \bigg(\frac{\omega_m}{|\Omega |}\bigg)^{2/m}(1-\Phi(\Omega))^{-1}.
\end{equation}
\item[\textup{(iii)}] If $\Omega$ is connected, then
\begin{equation}\label{p22c}
\lambda_1(\Omega)\ge
\frac{4m^2}{(m+2)^2}  \bigg(\frac{\omega_m}{|\Omega |}\bigg)^{2/m}(1-E(\Omega))^{-1}.
\end{equation}
\end{itemize}
\end{theorem}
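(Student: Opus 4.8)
The plan is to prove (i) first --- it is the substantive statement --- and then to obtain (ii) and (iii) by feeding $u=v_{\Omega}$, respectively $u=\varphi_{1,\Omega}$, into \eqref{p22}.

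\emph{Step 1: reduction of (i) to one dimension.} Since $\|u\|_{\infty}$, $\fint_{\Omega}u$ and $|\Omega|$ depend on $u$ only through its distribution function, while $\int_{\Omega}|\nabla u|^{2}$ does not increase under Schwarz symmetrisation (P\'olya--Szeg\"o), and since $\|u\|_{\infty}-\fint_{\Omega}u>0$, it suffices to prove \eqref{p22} when $\Omega$ is a ball $B_{R}$ with $\omega_{m}R^{m}=|\Omega|$ and $u$ is radial and non-increasing; the hypothesis $\fint_{\Omega}u\ge\frac{2}{m+2}\|u\|_{\infty}$ is unaffected, and equality in P\'olya--Szeg\"o forces $u$ to agree with its symmetrisation up to a translation (this will give the equality case). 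For radial non-increasing $u$ the super-level sets are concentric balls, so everything is explicit: writing $a=\|u\|_{\infty}=u(0)$ and $g=-u'\ge0$ (so $\int_{0}^{R}g=a$), and after the obvious scaling to $|\Omega|=\omega_{m}$,
\[ \|u\|_{\infty}-\fint_{\Omega}u=\int_{0}^{1}(1-s^{m})\,g(s)\,ds,\qquad \int_{\Omega}|\nabla u|^{2}=m\omega_{m}\int_{0}^{1}s^{m-1}g(s)^{2}\,ds, \]
the hypothesis becomes $\int_{0}^{1}s^{m}g\,ds\ge\frac{2}{m+2}\int_{0}^{1}g$, and, after normalising $g$ to unit mass and calling it $h$, \eqref{p22} is equivalent to
\[ \Bigl(\int_{0}^{1}(1-s^{m})h(s)\,ds\Bigr)\Bigl(\int_{0}^{1}s^{m-1}h(s)^{2}\,ds\Bigr)\ \ge\ \frac{4m}{(m+2)^{2}}\qquad(\ast) \]
for every measurable $h\ge0$ with $\int_{0}^{1}h=1$ and $\int_{0}^{1}s^{m}h\ge\frac{2}{m+2}$. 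The choice $h(s)=2s$ --- which corresponds to $u$ being a multiple of the torsion function of the ball --- makes $(\ast)$ an equality.

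\emph{Step 2: proof of $(\ast)$, the heart of the matter.} Write $J=\int_{0}^{1}s^{m}h\,ds\in[\frac{2}{m+2},1)$, so the first factor of $(\ast)$ is $1-J$. Cauchy--Schwarz, $J=\int_{0}^{1}\bigl(s^{(m-1)/2}h\bigr)\,s^{(m+1)/2}\,ds\le\bigl(\int_{0}^{1}s^{m-1}h^{2}\,ds\bigr)^{1/2}\bigl(\int_{0}^{1}s^{m+1}\,ds\bigr)^{1/2}$, gives $\int_{0}^{1}s^{m-1}h^{2}\ge(m+2)J^{2}$, so the left side of $(\ast)$ is at least $(m+2)(1-J)J^{2}$; since $\frac{2}{m+2}<\frac{2}{3}$ and $J\mapsto(1-J)J^{2}$ increases on $[0,\frac{2}{3}]$, this already proves $(\ast)$ for $J\in[\frac{2}{m+2},\frac{2}{3}]$, with equality only at $J=\frac{2}{m+2}$ (also the equality case of the Cauchy--Schwarz step, forcing $h\propto s$). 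For $J\in(\frac{2}{3},1)$ this bound is too weak, and one must exploit that a large $J$ --- a highly efficient $u$ --- forces $h$ to concentrate near $s=1$ and therefore forces $\int_{0}^{1}s^{m-1}h^{2}$ to blow up like $(1-J)^{-1}$. The clean way to capture this is to minimise $\int_{0}^{1}s^{m-1}h^{2}\,ds$ under the two linear constraints $\int_{0}^{1}h=1$, $\int_{0}^{1}s^{m}h=J$: this is a convex problem whose minimiser is an explicit function supported on an interval $[s_{0},1]$ with $s_{0}\uparrow1$ as $J\uparrow1$, and the resulting lower bound for the left side of $(\ast)$ stays above $\frac{4m}{(m+2)^{2}}$ on $(\frac{2}{3},1)$. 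Carrying out this quadratic minimisation and checking the bound is the one genuinely technical point. Tracing equality back through Step 1 and P\'olya--Szeg\"o then yields the stated equality case: $\Omega$ a ball and $u$ a multiple of $v_{\Omega}$.

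\emph{Step 3: deduction of (ii) and (iii).} Apply \eqref{p22} with $u=v_{\Omega}$. Then $\|u\|_{\infty}=\|v_{\Omega}\|_{\infty}$, $\fint_{\Omega}u=\Phi(\Omega)\|v_{\Omega}\|_{\infty}$, and testing $-\Delta v_{\Omega}=1$ with $v_{\Omega}$ gives $\int_{\Omega}|\nabla v_{\Omega}|^{2}=\|v_{\Omega}\|_{1}=|\Omega|\,\Phi(\Omega)\|v_{\Omega}\|_{\infty}$; the hypothesis of (i) is precisely $\Phi(\Omega)\ge\frac{2}{m+2}$, and in that case \eqref{p22} rearranges to \eqref{p22a} (in fact with an extra factor $\Phi(\Omega)\le1$ to spare). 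If instead $\Phi(\Omega)<\frac{2}{m+2}$, then $1-\Phi(\Omega)>\frac{m}{m+2}$, so the right side of \eqref{p22a} exceeds $\frac{1}{2m}\bigl(\frac{|\Omega|}{\omega_{m}}\bigr)^{2/m}=\|v_{B_{R}}\|_{\infty}$ with $\omega_{m}R^{m}=|\Omega|$, and \eqref{p22a} follows from Talenti's symmetrisation comparison $\|v_{\Omega}\|_{\infty}\le\|v_{B_{R}}\|_{\infty}$. Inequality \eqref{p22b} is then \eqref{p22a} combined with $\lambda_{1}(\Omega)\|v_{\Omega}\|_{\infty}\ge1$ from \eqref{e4}. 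For (iii) apply \eqref{p22} with $u=\varphi_{1,\Omega}$ (positive, in $H_{0}^{1}(\Omega)\cap L^{\infty}(\Omega)$), using $\int_{\Omega}|\nabla\varphi_{1,\Omega}|^{2}=\lambda_{1}(\Omega)\|\varphi_{1,\Omega}\|_{2}^{2}$ and $\|\varphi_{1,\Omega}\|_{2}^{2}\le\|\varphi_{1,\Omega}\|_{\infty}\|\varphi_{1,\Omega}\|_{1}=E(\Omega)\,|\Omega|\,\|\varphi_{1,\Omega}\|_{\infty}^{2}$; when $E(\Omega)\ge\frac{2}{m+2}$ this yields \eqref{p22c}, and when $E(\Omega)<\frac{2}{m+2}$ it is immediate from Faber--Krahn, $\lambda_{1}(\Omega)\ge\lambda_{1}(B_{R})$.
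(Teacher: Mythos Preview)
Your reformulation in Step 1 via $h=-u'/\|u\|_\infty$ is correct, and the Cauchy--Schwarz argument for $J\in[\tfrac{2}{m+2},\tfrac23]$ is slick --- cleaner, in fact, than the paper's monotonicity argument for that range. Steps (ii) and (iii) are essentially the paper's own arguments and are fine.

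The genuine gap is Step 2 for $J\in(\tfrac23,1)$. You correctly identify the constrained minimisation of $\int_0^1 s^{m-1}h^2$ subject to $h\ge0$, $\int h=1$, $\int s^m h=J$, and note that the minimiser is supported on some $[s_0,1]$; but you do not compute it or verify the bound, writing only that this ``is the one genuinely technical point.'' That is an understatement: this minimisation \emph{is} the obstacle problem the paper reduces to (your $s_0$ is the paper's contact radius $l$, and the minimiser is $h(s)=\tfrac{c}{m}(s-l^m s^{1-m})$ on $[l,1]$), and the paper devotes the bulk of its proof of (i) to establishing that the resulting lower bound stays above $\tfrac{4m}{(m+2)^2}$. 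It does so by splitting cases --- a comparison argument for $m\ge4$ and an explicit computation of $g(l)=(1-\theta(l))F^*(\theta(l))$ with verification that $g'>0$ for $m=2,3$ --- which shows that ``checking the bound'' is the substantive difficulty, not a formality. As written, your proof stops exactly where the real work begins. A smaller point: your assertion that equality in P\'olya--Szeg\H{o} forces $u=u^*$ up to translation is not automatic; it needs the Brothers--Ziemer criterion (critical set of $u^*$ null), which the paper invokes only after showing $u^*$ must be the torsion function.
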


\medskip

The last result of this paper concerns the localisation of a sequence of torsion functions. We make the following definition.
\begin{definition}\label{def2}
Let $(\Omega_n)$ be a sequence of open sets in $\R^m$ with $0<|\Omega_n|<\infty$, and let
\begin{equation*}
\mathfrak{A}((\Omega_n))=\bigg\{(A_n): (\forall n\in \N)(A_n\subset\Omega_n, A_n\, \textup{measurable}), \lim_{n\rightarrow\infty}\frac{|A_n|}{|\Omega_n|}=0\bigg\}.
\end{equation*}
Let $1\le p<\infty$. For $n\in \N,\,f_n\in L^p(\Omega_n),f_n\ge 0,$ and $f_n\ne 0$, we define
\begin{equation}\label{e12}
\kappa=\sup\bigg\{\limsup_{n\rightarrow\infty}\frac{\|f_n{\bf 1}_{A_n}\|_p^p}{\|f_n\|_p^p}:(A_n)\in\mathfrak{A}((\Omega_n))\bigg\}.
\end{equation}
\begin{enumerate}
\item[\textup{(i)}]We say that the sequence $\big(f_{n}\big)$ $\kappa$-localises in $L^p$ if $0<\kappa<1$.
\item[\textup{(ii)}]We say that the sequence $\big(f_{n}\big)$ localises in $L^p$ if $\kappa=1$.
\item[\textup{(iii)}]We say that the sequence $\big(f_{n}\big)$ does not localise in $L^p$ if $\kappa=0$.
\end{enumerate}
\end{definition}

Using Cantor's diagonalisation we see the supremum in \eqref{e12} is in fact a maximum. That is we have a maximising sequence in $\mathfrak{A}((\Omega_n))$.
To show that $\big(f_n\big)$ localises in $L^p$ is equivalent to showing the existence of a sequence of measurable sets $A_n\subset\Omega_n,\,n\in \N$ such that
\begin{equation}\label{ea12}
\lim_{n\rightarrow\infty}\frac{|A_n|}{|\Omega_n|}=0,\,\,\, \lim_{n\rightarrow\infty}\frac{\|f_n{\bf 1}_{A_n}\|^p_p}{\|f_n\|^p_p}=1.
\end{equation}
To show that $(f_n)$ does not localise in $L^p$ is equivalent to showing that for any sequence $(A_n)$ of measurable
sets $A_n\subset\Omega_n,\,n\in \N$ the following implication holds:
\begin{equation}\label{ea13}
\lim_{n\rightarrow\infty}\frac{|A_n|}{|\Omega_n|}=0 \Rightarrow \lim_{n\rightarrow\infty}\frac{\|f_n{\bf 1}_{A_n}\|_p}{\|f_n\|_p}=0.
\end{equation}

If $\big(f_{n}\big)$ $\kappa$-localises in $L^p$ then there is a sequence  $(A_n)\in\mathfrak{A}((\Om_n))$ which (asymptotically) supports a fraction $\kappa$ of $\|f_n\|_p^p$.
Given such a maximising sequence $(A_n)$ it is possible to construct a sequence $(\tilde{A_n})\in\mathfrak{A}((\Om_n))$ which (asymptotically) supports a fraction $\tilde{\kappa}$ of $\|f_n\|_p^p$
with $0<\tilde{\kappa}<\kappa$. Hence the requirement of the supremum in the definition of $\kappa$ in \eqref{e12}. In Section \ref{sec2} we analyse two examples in detail where localisation in $L^1$ and $\kappa$-localisation in $L^1$ occur for a family of torsion functions for Schr\"odinger operators (Example \ref{exa1}), and for a family of torsion functions for Dirichlet Laplacians (Example \ref{exa2}).

\medskip

The theorem below asserts that localisation or  $\kappa$-localisation for the torsion function in $L^1$ implies localisation for the corresponding first Dirichlet eigenfunction in $L^2$.
\begin{theorem}\label{the4}
Let $(\Omega_n)$ be a sequence of open sets in $\R^m$ with $0 < |\Omega_n|<\infty$ and let $V_n:\Omega_n\to \R^+$ be a sequence of measurable functions.
If $(v_{\Omega_n,V_n})$ either localises or $\kappa$-localises in $L^1$, and if $\lambda_1(\Omega_n,V_n)$ has multiplicity $1$, then $(\varphi_{1,\Omega_n,V_n})$ localises in $L^2$, and $\lim_{n\rightarrow\infty}\Phi(\Om_n,V_n)=\lim_{n\rightarrow\infty}E(\Om_n,V_n)=0$.
\end{theorem}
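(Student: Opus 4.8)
The plan is to exploit the two-sided pointwise comparison between the torsion function $v_{\Omega,V}$ and the first eigenfunction $\varphi_{1,\Omega,V}$ that follows from the spectral/maximum-principle relation
$$
v_{\Omega,V} = \sum_{k\ge 1} \frac{\langle 1,\varphi_{k,\Omega,V}\rangle}{\lambda_k(\Omega,V)}\,\varphi_{k,\Omega,V},
$$
together with the fact that the heat semigroup $e^{-tL}$ is positivity preserving. The key elementary estimate is that there is a constant $a_n := \frac{\langle 1,\varphi_{1,\Omega_n,V_n}\rangle}{\lambda_1(\Omega_n,V_n)}>0$ with
$$
a_n\,\varphi_{1,\Omega_n,V_n}(x) \le v_{\Omega_n,V_n}(x) \le \frac{c_m}{\lambda_1(\Omega_n,V_n)}\cdot\frac{\varphi_{1,\Omega_n,V_n}(x)}{\|\varphi_{1,\Omega_n,V_n}\|_\infty}\quad\text{(up to absolute constants),}
$$
the lower bound being immediate from positivity of all the other terms in the eigenfunction expansion (after noting $\langle 1,\varphi_k\rangle$ need not be signed — so instead I would use $v_{\Omega,V}=\int_0^\infty e^{-tL}\mathbf 1\,dt \ge \int_0^\infty e^{-t\lambda_1}\langle\mathbf 1,\varphi_1\rangle\varphi_1\,dt$ via positivity of the semigroup), and the upper bound coming from the ground-state domination $e^{-tL}\mathbf 1 \le C\,e^{-t\lambda_1}\|\varphi_1\|_\infty^{-1}\varphi_1$ valid for $t$ bounded below, combined with the trivial bound on a short initial time interval; alternatively one can invoke that $v_{\Omega,V}\le v_\Omega$ and then an intrinsic-ultracontractivity-type bound. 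The upshot is that $v_{\Omega_n,V_n}$ and $\varphi_{1,\Omega_n,V_n}$ are comparable up to a factor depending only on $\lambda_1(\Omega_n,V_n)$ and dimensional constants.

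Next I would transfer the localisation hypothesis. Suppose $(v_{\Omega_n,V_n})$ localises or $\kappa$-localises in $L^1$, so there is $(A_n)\in\mathfrak A((\Omega_n))$ with $\liminf_n \|v_{\Omega_n,V_n}\mathbf 1_{A_n}\|_1/\|v_{\Omega_n,V_n}\|_1 = \kappa'>0$. The first point is that $\kappa$-localisation or localisation in $L^1$ forces $\Phi(\Omega_n,V_n)\to 0$: indeed on the complement $\Omega_n\setminus A_n$, which carries all but a vanishing fraction — in the localisation case — or a definite fraction of the mass, one uses $|\Omega_n\setminus A_n|\sim|\Omega_n|$ while $\|v\mathbf 1_{\Omega_n\setminus A_n}\|_1 = (1-\kappa'+o(1))\|v\|_1$; combined with $\|v\|_1 \le |\Omega_n|\,\|v\|_\infty$, the only way the mass can concentrate on the small set $A_n$ (localisation case) is $\|v\|_1/(|\Omega_n|\|v\|_\infty)\to 0$, and in the $\kappa$-localisation case one splits $\|v\|_1 = \|v\mathbf 1_{A_n}\|_1 + \|v\mathbf 1_{\Omega_n\setminus A_n}\|_1 \le |A_n|\|v\|_\infty + \|v\mathbf 1_{\Omega_n\setminus A_n}\|_1$ and iterates the localisation property on $\Omega_n\setminus A_n$ (using that $\kappa$-localisation lets one peel off further concentrated sets, as remarked after Definition \ref{def2}) to conclude $\|v\|_1 = o(|\Omega_n|\|v\|_\infty)$ as well, i.e. $\Phi(\Omega_n,V_n)\to 0$ in both cases.

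Then I would bootstrap from $\Phi\to 0$ to $L^2$-localisation of $\varphi_{1,\Omega_n,V_n}$ and $E(\Omega_n,V_n)\to 0$. Using the pointwise comparison $v_{\Omega_n,V_n}\ge a_n\varphi_{1,\Omega_n,V_n}$ and $v_{\Omega_n,V_n}\le (\text{const}/\lambda_1)\varphi_{1,\Omega_n,V_n}/\|\varphi_{1,\Omega_n,V_n}\|_\infty$, the $L^1$-mass of $v$ and a weighted $L^1$-mass of $\varphi_1$ are comparable on every measurable subset; more precisely, for any $A\subset\Omega_n$,
$$
\frac{\|v_{\Omega_n,V_n}\mathbf 1_A\|_1}{\|v_{\Omega_n,V_n}\|_1} \quad\text{and}\quad \frac{\|\varphi_{1,\Omega_n,V_n}\mathbf 1_A\|_1}{\|\varphi_{1,\Omega_n,V_n}\|_1}
$$
are within a bounded multiplicative factor of each other. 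Hence $L^1$-(\(\kappa\)-)localisation of $(v_{\Omega_n,V_n})$ yields $L^1$-(\(\kappa\)-)localisation of $(\varphi_{1,\Omega_n,V_n})$, and the argument of the previous paragraph applied to $\varphi_1$ gives $E(\Omega_n,V_n)\to 0$. Finally, to upgrade $L^1$-localisation of the eigenfunctions to $L^2$-localisation, I would use that $0\le\varphi_{1,\Omega_n,V_n}\le\|\varphi_{1,\Omega_n,V_n}\|_\infty$, so on any set $A_n$,
$$
\frac{\|\varphi_{1}\mathbf 1_{A_n}\|_2^2}{\|\varphi_1\|_2^2} \ge \frac{\|\varphi_1\mathbf 1_{A_n}\|_1^2}{|A_n|\,\|\varphi_1\|_2^2} \quad\text{and}\quad \|\varphi_1\|_2^2 \le \|\varphi_1\|_\infty\|\varphi_1\|_1,
$$
together with a complementary bound showing the $L^2$-mass off a small set is controlled by the $L^1$-mass off that set times $\|\varphi_1\|_\infty$, and $\|\varphi_1\|_\infty/\|\varphi_1\|_1 = (|\Omega_n|E(\Omega_n,V_n))^{-1}\to\infty$; so a set carrying almost all the $L^1$-mass carries almost all the $L^2$-mass. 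I expect the main obstacle to be making the upper pointwise comparison $v_{\Omega_n,V_n}\lesssim \lambda_1^{-1}\varphi_1/\|\varphi_1\|_\infty$ fully rigorous and uniform in $n$ — this is essentially an intrinsic ultracontractivity estimate for Schrödinger operators, and one must check it survives with the $V_n\ge 0$ potentials and under no geometric assumptions on $\Omega_n$ beyond finite measure; the cleanest route is probably via $v_{\Omega_n,V_n}\le v_{\Omega_n}$ (monotonicity in $V$) and the known heat-kernel domination for the free Dirichlet Laplacian, at the cost of replacing $\lambda_1(\Omega_n,V_n)$ by $\lambda_1(\Omega_n)$ and then controlling the ratio, together with the relation $\lambda_1(\Omega_n,V_n)\|v_{\Omega_n,V_n}\|_\infty \asymp 1$ analogous to \eqref{e4}–\eqref{e5}.
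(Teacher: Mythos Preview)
Your plan hinges on a two-sided \emph{pointwise} comparison between $v_{\Omega_n,V_n}$ and $\varphi_{1,\Omega_n,V_n}$, and the upper half of that comparison is where the argument breaks. The bound
\[
v_{\Omega,V}(x)\ \lesssim\ \lambda_1(\Omega,V)^{-1}\,\frac{\varphi_{1,\Omega,V}(x)}{\|\varphi_{1,\Omega,V}\|_\infty}
\]
is essentially intrinsic ultracontractivity for the semigroup $e^{-tL}$, and this \emph{fails} for general open sets of finite measure with no geometric regularity (take for instance long thin cusps or the disconnected examples in the paper's Example~\ref{exa2}). You flag this yourself as ``the main obstacle'', but neither of your suggested workarounds helps: passing to $v_{\Omega_n,V_n}\le v_{\Omega_n}$ or to the free heat kernel gives an upper bound by something other than $\varphi_{1,\Omega_n,V_n}$, so you still cannot compare $L^1$-masses of $v$ and $\varphi_1$ on arbitrary subsets. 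Without the upper pointwise bound, your transfer of localisation from $v$ to $\varphi_1$ collapses. A secondary issue: your argument that $\kappa$-localisation implies $\Phi\to 0$ via ``iterating the localisation property on $\Omega_n\setminus A_n$'' is not well-defined; $\kappa$ is a supremum over all sequences, and there is no reason a maximising sequence restricted to $\Omega_n\setminus A_n$ should again $\kappa$-localise.

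The paper avoids pointwise comparison entirely. From the Parseval expansion $\|v_{\Omega,V}\|_1=\sum_j\lambda_j^{-1}\big(\int_\Omega\varphi_j\big)^2$ one keeps only the $j=1$ term to get the \emph{integral} inequality $\|v_{\Omega,V}\|_1\ge \lambda_1^{-1}\|\varphi_{1}\|_1^2$; combined with $\lambda_1\|v_{\Omega,V}\|_\infty\le \mathfrak d_m$ (Lemma~\ref{lem1}) this yields
\[
\mathfrak d_m\,\Phi(\Omega,V)\ \ge\ \frac{\|\varphi_{1,\Omega,V}\|_1^2}{|\Omega|}.
\]
A one-line lemma (Lemma~\ref{lem3}) shows that localisation \emph{or} $\kappa$-localisation in $L^p$ directly forces vanishing mean-to-max ratio, so $\Phi(\Omega_n,V_n)\to 0$, hence $|\Omega_n|^{-1}\|\varphi_{1,n}\|_1^2\to 0$. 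Since $\|\varphi_{1,n}\|_2=1$, an elementary level-set argument (cited from \cite{vdBBDPG}) then gives $L^2$-localisation of $(\varphi_{1,n})$, and Lemma~\ref{lem3} with $p=2$ gives $E(\Omega_n,V_n)\to 0$. No pointwise domination, no intrinsic ultracontractivity, no iteration.
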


It follows from Theorem 1.1 in \cite{DPGGLB} that, when restricting $\Omega$ to be open, bounded and convex in $\R^m,\, m\ge 1$, one has
\begin{equation}\label{e70}
\frac{2}{m(m+2)}\le \inf\{\Phi(\Omega): \Omega\subset \R^m,\,\Omega\,\,\,\textup{open,\,convex},\,0<|\Omega|<\infty\}.
\end{equation}
We see by \eqref{e70} that for any sequence $(\Omega_n)$ of elongating open, bounded and convex sets in $\R^m$,
$(v_{\Omega_n})$ has non-vanishing efficiency, and so by Theorem \ref{the4}, $(v_{\Om_n})$ is not localising or $\kappa$-localising in $L^1$.
This is in contrast with the results of \cite{vdBBDPG}, where localisation of a sequence of first Dirichlet eigenfunctions was obtained for a wide class of elongating, open, bounded and convex sets in $\R^m$.
Further examples demonstrating different behaviour of the torsion function, and the first eigenfunction of
the Dirichlet Laplacian around their respective maxima for elongated convex planar domains have been constructed in \cite{Beck}.

\medskip

\medskip

This paper is organised as follows. Examples \ref{exa1} and \ref{exa2} will be analysed in Section \ref{sec2}. The proofs of Theorems \ref{the1}, \ref{the7}, \ref{the9} and \ref{the4} will be given in Section \ref{sec3}--\ref{sec6} respectively.

\section{ Examples \label{sec2}}

In Example \ref{exa1} below we analyse localisation and $\kappa$-localisation in $L^1$ for a family of Schr\"odinger operators in one dimension,
parametrised by three real numbers, $\nu > 1$, $0 < \alpha < 1$, and $c >0$.
\begin{example}\label{exa1}
Let $\Omega=(-1, 1)$, $\nu > 1$, and $0 < \varepsilon < 1$.
Denote by $V(x) \equiv V_{\nu, \varepsilon}(x)$ the potential
$$
V(x) = \nu^2 {\bf 1}_{(-1, -\varepsilon)}(x) +
\nu^2 {\bf 1}_{(\varepsilon, 1)}(x)
\, , \qquad x \in (-1, 1)\, ,
$$
and by $ v_{\nu, \varepsilon}(x) \equiv v_{(-1,1), V_{\nu, \varepsilon}}(x)$ the
torsion function for $- \Delta + V$ on $(-1, 1)$ with Dirichlet boundary conditions.
If $\varepsilon_\alpha(\nu) = \frac{c}{\nu^\alpha}$ with $0 < \alpha < 1$,  $c>0$, and
$\nu > 1$ sufficiently large so that $0 < \varepsilon_\alpha(\nu) < 1$, then the following holds:
\begin{itemize}
\item[\textup{(i)}]If $\frac 23 < \alpha < 1$, then $(v_{n, \varepsilon_\alpha(n)})$ does not localise in $L^1$, $\kappa=0.$
\item[\textup{(ii)}]If $0 <\alpha < \frac 23$, then $(v_{n, \varepsilon_\alpha(n)})$ localises in $L^1$, $\kappa=1.$
\item[\textup{(iii)}]If $\alpha = \frac 23$, then $(v_{n, \varepsilon_\alpha(n)})$ $\kappa_c$-localises in $L^1$ with
\begin{equation*}
\kappa_c=\frac{c^3/3}{1+c^3/3}.
\end{equation*}
\end{itemize}
\end{example}
\begin{proof}
Since the potential $V$ is even, so is the torsion
function $v \equiv v_{\nu, \varepsilon}$ and hence it suffices to determine
$v$ on $[0, 1]$. On the interval $[ 0, \varepsilon]$,
the function $v$ is of the form
\begin{equation*}
v_{1}(x) := - \frac{1}{2}x^2 + \gamma\, , \quad 0 \le x \le \varepsilon\, , \qquad \quad \gamma \equiv \gamma_{\nu, \varepsilon},
\end{equation*}
whereas on the interval $[\varepsilon, 1]$, we make the following Ansatz:
\begin{equation}\label{ansatz v_2}
v_{2}(x) := \frac{1}{\nu^2}
- \alpha e^{\nu x}
+ \beta e^{-\nu x}\, ,
\quad \varepsilon  \le x \le 1\, ,
 \qquad \quad  \alpha \equiv \alpha_{\nu, \varepsilon}, \   \beta \equiv \beta_{\nu, \varepsilon}.
\end{equation}
It is straightforward to verify that
$$
(-\Delta + V) v_{ 1} = - \Delta v_1 =1, \quad 0 \le x \le  \varepsilon ,
$$
and
$$
(-\Delta + V) v_{2} = - \nu^2(v_2 - \frac{1}{\nu^2})  + \nu^2 v_2 =1, \quad \varepsilon \le x \le 1.
$$
The constants $\gamma,$ $\alpha$, and $\beta$ are determined by the boundary condition
$v_{ 2}(1) = 0$, and the matching conditions
$$
v_{1}(\varepsilon) = v_{ 2}(\varepsilon)\, ,\qquad
v_{1}'(\varepsilon) = v_{ 2}'(\varepsilon)\, ,
$$
where $'$ denotes the derivative with respect to the variable $x$.
The boundary condition $v_{2}(1) = 0$ yields
$\frac{1}{\nu^2} - \alpha e^{\nu} + \beta e^{-\nu} =0$ or
\begin{equation}\label{formula beta 1}
\beta = \alpha e^{2\nu} -
\frac{1}{\nu^2} e^{\nu}\, .
\end{equation}
The matching condition
$v_{1}'(\varepsilon) =
v_{2}'(\varepsilon)$ reads
$\varepsilon = \nu \alpha e^{\nu \varepsilon} + \nu \beta e^{-\nu \varepsilon}$ or,  with \eqref{formula beta 1},
\begin{equation}\label{formula alpha nu 1}
\alpha = \frac{\varepsilon +\frac{1}{\nu} e^{\nu ( 1-\varepsilon)}}{\nu(e^{\nu \varepsilon} +  e^{2\nu - \nu \varepsilon})}
 = \frac{1}{\nu^2 e^{\nu}}
\frac{1 + \nu \varepsilon e^{-\nu(1 - \varepsilon)}}{1 + e^{-2\nu(1 - \varepsilon)}}\,.
\end{equation}
The leading term of $\alpha$ is thus $\frac{1}{\nu^2 e^{\nu}}$,
\begin{equation}\label{formula alpha nu 2}
\alpha = \frac{1}{\nu^2 e^{\nu}} + \frac{ \varepsilon}{\nu e^{\nu(2 -  \varepsilon)}} \frac{1 - \frac{1}{\nu \varepsilon } e^{-\nu(1 - \varepsilon)}}{1 + e^{-2\nu(1 - \varepsilon)}} .
\end{equation}
Combining \eqref{formula beta 1} and \eqref{formula alpha nu 2} we obtain the following formula for $\beta$,
\begin{equation}\label{formula beta 2}
\beta =
\frac{\varepsilon e^{\nu \varepsilon}}{\nu} \frac{1 - \frac{1}{\nu \varepsilon} e^{-\nu(1- \varepsilon)}}{1 + e^{-2\nu(1 - \varepsilon)}}
= \frac{\varepsilon e^{\nu \varepsilon}}{\nu}
- \frac{ e^{\nu \varepsilon}}{\nu^2} \frac{1}{e^{\nu(1- \varepsilon)}}
\frac{1 + \nu \varepsilon e^{-\nu(1 - \varepsilon)}}{1 + e^{-2\nu(1 - \varepsilon)}} .
\end{equation}
Finally the matching condition $v_{1}(\varepsilon) = v_{2}(\varepsilon)$ reads as
$$
- \frac{1}{2} \varepsilon^2 + \gamma =
\frac{1}{\nu^2}  - \alpha e^{\nu \varepsilon}
+ \beta e^{-\nu \varepsilon}\, ,
$$
which, when combined with \eqref{formula alpha nu 1} and \eqref{formula beta 2},
yields the following formula for $\gamma$
\begin{equation}\label{formula gamma}
\gamma = \frac{1}{2} \varepsilon^2 + \frac{\varepsilon}{\nu} + \frac{1}{\nu^2}
-  \frac{2}{\nu^2 e^{\nu(1- \varepsilon)}} \frac{1 + \nu \varepsilon e^{-\nu(1 - \varepsilon)}}{1 + e^{-2\nu(1 - \varepsilon)}} .
\end{equation}
Now let us compute
$$
\frac{\int_{\varepsilon }^1 v}{\int_0^1 v}
= \frac{\int_{\varepsilon }^1 v_2}{\int_{0}^{\varepsilon } v_1 + {\int_{\varepsilon }^1 v_2}}.
$$
We have $\int_{0}^{\varepsilon } v_1 = (- \frac 16 x^3 + \gamma x )|_0^\varepsilon$, yielding
\begin{equation}\label{formula integral 1}
\int_{0}^{\varepsilon} v_1 =
\frac{\varepsilon^3}{3} + \frac{\varepsilon^2}{\nu} + \frac{\varepsilon}{\nu^2}
-  \frac{2 \varepsilon}{\nu^2 e^{\nu(1- \varepsilon)}} \frac{1 + \nu \varepsilon e^{-\nu(1 - \varepsilon)}}{1 + e^{-2\nu(1 - \varepsilon)}} .
\end{equation}
Similarly, we compute
$$
\int_{\varepsilon }^1 v_2 = \frac{1 - \varepsilon}{\nu^2} - \frac{\alpha}{\nu}( e^{\nu} - e^{\nu \varepsilon})
- \frac{\beta}{\nu}( e^{-\nu} - e^{-\nu \varepsilon}) ,
$$
which yields
\begin{equation}\label{formula integral 2}
\int_{\varepsilon }^1 v_2 = \frac{1}{\nu^2} - \frac{1}{\nu^3}
-  \frac{2 \varepsilon}{\nu^2 e^{\nu(1- \varepsilon)}} \frac{1 -\frac{1}{ \nu \varepsilon} e^{-\nu(1 - \varepsilon)}}{1 + e^{-2\nu(1 - \varepsilon)}} .
\end{equation}
Combining \eqref{formula integral 1} and \eqref{formula integral 2}, one obtains in the case where
$\varepsilon = \frac{c}{\nu^\alpha}$,
\begin{equation}\label{integral v}
 \int_{0}^{\varepsilon} v_1   + \int_{\varepsilon }^1 v_2
 = \frac{c^3}{3\nu^{3 \alpha}} + \frac{1}{\nu^2} + O\big(\nu^{-1 - 2 \alpha}\big),\,\nu \rightarrow\infty,
\end{equation}
and hence
$$
\lim_{\nu \to \infty} \frac{\int_{\varepsilon }^1 v_2}{ \int_{0}^{\varepsilon} v_1   + \int_{\varepsilon }^1 v_2 }
= \begin{cases} 1 \qquad \qquad \ \text{if} \ \ \frac 23 < \alpha < 1,\\
 0 \qquad \qquad \ \text{if} \ \ 0 < \alpha < \frac 23, \\
 \frac{1}{1 + c^3/3} \qquad \text{if} \ \ \alpha = \frac 23 .
\end{cases}
$$
This implies that
\begin{equation}\label{limit of quotient}
\lim_{\nu \to \infty} \frac{\int_{-\varepsilon }^{\varepsilon } v}{ \int_{-1}^1 v }
= 1 - \lim_{\nu \to \infty} \frac{\int_{\varepsilon }^1 v_2}{ \int_{0}^{\varepsilon} v_1   + \int_{\varepsilon }^1 v_2 }
= \begin{cases} 0 \qquad \qquad \ \text{if} \ \  \frac 23 < \alpha < 1,\\
 1 \qquad \qquad \ \text{if} \ \ 0 < \alpha < \frac 23, \\
 \frac{c^3/3}{1 + c^3/3} \qquad \text{if} \ \ \alpha = \frac 23 .
\end{cases}
\end{equation}
To prove (i) we let $A_n \subset (-1, 1),\,n\in\N$ be an arbitrary sequence of measurable sets which satisfy $\lim_{n\rightarrow\infty} |A_n|=0.$
Since $v_n \equiv v_{n, \varepsilon_\alpha(n)}$ is even, it follows that, with $\varepsilon_n \equiv \varepsilon_\alpha(n)$,
$$
\int_{A_n} v_n \le 2 \int_0^{\varepsilon_n} v_n + 2  \int_{\varepsilon_n}^{2\varepsilon_n} v_n
+   \int_{A_n \cap [2\varepsilon_n, 1]} v_n
+   \int_{A_n \cap [-1, -2\varepsilon_n]} v_n.
$$
First note that  by \eqref{integral v}
\begin{equation}\label{integral v 1}
 \int_{0}^{\varepsilon_n} v_n   + \int_{\varepsilon_n }^1 v_n = \frac{1}{n^2} + O\big(n^{-3\alpha}\big),\,\nu \rightarrow\infty.
\end{equation}
By \eqref{limit of quotient}
$\lim_{n \to \infty} \frac{\int_0^{\varepsilon_n } v_n}{ \int_{0}^1 v_n } = 0$. Since for $n$ sufficiently large,
$v_n(x)$ is decreasing on $[0, 1]$ and by \eqref{formula gamma},
$v_n(\varepsilon_n) =  \frac{ \varepsilon_n}{n} + O(\frac{1}{n^2})$, it follows from \eqref{integral v 1} that
$$
\lim_{n \to \infty} \frac{\int_{\varepsilon_n }^{2\varepsilon_n } v_n}{ \int_{0}^1 v_n } \le
\lim_{n \to \infty} \frac{ \varepsilon_n v_n(\varepsilon_n)}{ \int_{0}^1 v_n } = 0 .
$$
Using once more that for $n$ sufficiently large, $v_n(x)$ is decreasing on $[0, 1]$, one has
$v_n(2\varepsilon_n) \le  \frac{1}{n^2} + O( \frac{1}{n^{1+\alpha}} e^{-n(1-\alpha)})$ (see \eqref{ansatz v_2}, \eqref{formula beta 2}), and
then infers from \eqref{integral v 1} that
$$
\lim_{n \to \infty} \frac{\int_{A_n \cap [2\varepsilon_n, 1]} v_n}{ \int_{0}^1 v_n}
\le \lim_{n \to \infty} \frac{v_n(2\varepsilon_n) |A_n|}{ \int_{0}^1 v_n}  = 0 .
$$
Similarly, one has
$$
\lim_{n \to \infty} \frac{\int_{A_n \cap [-1, - 2\varepsilon_n]} v_n}{ \int_{0}^1 v_n}  = 0 .
$$
To prove (ii) we obtain a lower bound for the supremum in \eqref{e12} by choosing the sequence $A_n = (- \varepsilon_n, \varepsilon_n),\,n\in \N$.
By \eqref{limit of quotient} one has $\kappa = 1$ in this case.

To prove (iii) we obtain a lower bound for the supremum in \eqref{e12} by choosing the sequence $A_n = (- \varepsilon_n, \varepsilon_n),\,n\in \N$.
Hence by  \eqref{limit of quotient} we then get
\begin{equation}\label{lower bound kappa}
\kappa \ge  \kappa_c = \frac{c^3/3}{1 + c^3/3} .
\end{equation}
To prove the reverse inequality we let $A_n\subset (-1, 1),\,n\in\N$ be an arbitrary sequence of measurable sets which satisfy $\lim_{n\rightarrow\infty} |A_n|=0.$
In view of \eqref{lower bound kappa} we may assume without loss of generality that
$A_n$ is symmetric, $A_n = - A_n$, and that  $[-\varepsilon_n , \varepsilon_n ] \subset A_n $ for any $n$ (sufficiently large).
It then suffices to show that
$$
\lim_{n \to \infty} \frac{\int_{A_n \cap [\varepsilon_n, 1]} v_n}{ \int_{0}^1 v_n}  = 0.
$$
By \eqref{integral v}
$$
 \int_{0}^{\varepsilon_n} v_n   + \int_{\varepsilon_n }^1 v_n
 = \frac{1 + c^3/3}{n^2} + O(n^{-7/3}).
 $$
 As in the proof of item (i), we estimate
 $$
\int_{A_n \cap [\varepsilon_n, 1]} v_n  \le    \int_{\varepsilon_n}^{2\varepsilon_n} v_n +   \int_{A_n \cap [2\varepsilon_n, 1]} v_n,
$$
 and obtain
 $$
\lim_{n \to \infty} \frac{\int_{\varepsilon_n }^{2\varepsilon_n } v_n}{ \int_{0}^1 v_n } \le
\lim_{n \to \infty} \frac{ \varepsilon_n v_n(\varepsilon_n)}{ \int_{0}^1 v_n } = 0 .
 $$
 and
 $$
\lim_{n \to \infty} \frac{\int_{A_n \cap [2\varepsilon_n, 1]} v_n}{ \int_{0}^1 v_n}
\le \lim_{n \to \infty} \frac{v_n(2\varepsilon_n) |A_n|}{ \int_{0}^1 v_n}  = 0 .
 $$
 Altogether we proved that $\kappa = \kappa_c$ and that
the supremum $\kappa$ is attained by the sequence $(A_n)$ with $(-\varepsilon_n, \varepsilon_n)$.
\end{proof}

In Example \ref{exa2} below we analyse localisation and $\kappa$-localisation in $L^1$ for a family of sequences of open sets in $\R^m$, parametrised by three real positive $\alpha,\beta$ and $c$.

\begin{example}\label{exa2}
Let $m\ge 1 $, and let $\Omega_n,\,n\in \N,$ be the union of $n+1$ open balls $B(p_1;n^{-\alpha}),...,B(p_n;n^{-\alpha}), B(p_{n+1};cn^{-\beta})$ with centers $p_1,...,p_{n+1}$ respectively. Let $c>0$, and let
\begin{equation}\label{eb0}
|p_i-p_j|\ge 2+c, \quad \forall(i,j)\in \{\{1,...,n\}^2, i\ne j\},
\end{equation}
where
\begin{equation}\label{eb1}
\beta>\alpha-\frac1m\ge 0.
\end{equation}
\begin{itemize}
\item[\textup{(i)}]If $\beta>\alpha-\frac{1}{m+2}$, then $(v_{\Omega_{n}})$ does not localise in $L^1$, $\kappa=0.$
\item[\textup{(ii)}]If $\beta<\alpha-\frac{1}{m+2}$, then $(v_{\Omega_{n}})$ localises in $L^1$, $\kappa=1.$
\item[\textup{(iii)}]If $\beta=\alpha-\frac{1}{m+2}$, then $(v_{\Omega_{n}})$ $\kappa_c$-localises in $L^1$ with
\begin{equation}\label{e13}
\kappa_c=\frac{c^{m+2}}{1+c^{m+2}}.
\end{equation}
\end{itemize}
\end{example}

An example of a sequence of open, bounded, simply connected planar sets with fixed measure $1$ for which the torsion function is $\kappa$-localising in $L^1$ for some $0<\kappa<1$ has been given in Theorem 2 of \cite{vdBK}.

\begin{proof}
First observe that condition \eqref{eb0}  guarantees that the $n+1$ open balls do not intersect pairwise. The first inequality in \eqref{eb1} guarantees that the measure of $B(p_{n+1};cn^{-\beta})$ is negligible compared with the measure of $\Om_n$ in the limit $n\rightarrow\infty$. The second inequality in \eqref{eb1} implies that $|\Om_n|$ remains bounded for large $n$.
The torsion function for the open ball $B(p;R)$ is given by
\begin{equation*}
v_{B(p;R)}(x)=\frac{R^2-|x-p|^2}{2m}.
\end{equation*}
Hence
\begin{equation*}
\|v_{B(p;R)}\|_1=\rho_m R^{m+2},
\end{equation*}
where $\rho_m=\frac{\omega_m}{m(m+2)}$ is the torsional rigidity for a ball in $\R^m$ with radius $1$. Furthermore
\begin{equation*}
\|v_{B(p;R)}\|_{\infty}=\frac{R^2}{2m},
\end{equation*}
and
\begin{equation}\label{eb6}
\|v_{\Om_n}\|_1=\rho_m\big(c^{m+2}n^{-(m+2)\beta}+n^{1-(m+2)\alpha}\big).
\end{equation}
To prove (i) we let $A_n\subset\Om_n,\,n\in\N$ be an arbitrary sequence of measurable sets which satisfy $\lim_{n\rightarrow\infty}\frac{|A_n|}{|\Om_n|}=0.$  We have
\begin{align}\label{eb7}
\int_{A_n}v_{\Om_n}&=\int_{A_n\cap B(p_{n+1};cn^{-\beta})}v_{\Om_n}+\int_{A_n\cap\big(\cup_{i=1}^n B(p_i;n^{-\alpha})\big)}v_{\Om_n}\nonumber \\ &
\le \int_{ B(p_{n+1};cn^{-\beta})}v_{\Om_n}+\int_{A_n}\|v_{B(p_1;n^{-\alpha})}\|_{\infty}\nonumber \\ &
=\rho_mc^{m+2}n^{-(m+2)\beta}+\frac{1}{2m}|A_n|n^{-2\alpha}.
\end{align}
By \eqref{eb6} we have
\begin{align*}
\frac{\int_{A_n}v_{\Om_n}}{\|v_{\Om_n}\|_1}&\le c^{m+2}n^{(m+2)(\alpha-\beta)-1}+\frac{\omega_m}{2m\rho_m}\frac{|A_n|}{|\Om_n|}\frac{n^{-2\alpha}\big(c^mn^{-m\beta}+n^{1-m\alpha}\big)}{n^{1-(m+2)\alpha}}\nonumber \\ &
\le c^{m+2}n^{(m+2)(\alpha-\beta)-1}+\frac{\omega_m}{2m\rho_m}\big(c^m+1\big)\frac{|A_n|}{|\Om_n|},
\end{align*}
where we have used \eqref{eb1} to bound the second term in the previous line.
By the hypothesis for $\beta$ under (i) and the hypothesis on $(A_n)$ above we conclude
\begin{equation*}
\lim_{n\rightarrow\infty}\frac{\int_{A_n}v_{\Om_n}}{\|v_{\Om_n}\|_1}=0.
\end{equation*}
This proves the implication under \eqref{ea13}, and concludes the proof of (i).

\medskip

\noindent To prove (ii) let $A_n=B(p_{n+1};cn^{-\beta}),\,n\in \N$. This gives,
\begin{align}\label{eb10}
\kappa &\ge \limsup_{n\rightarrow \infty}\frac{\int_{B(p_{n+1};cn^{-\beta})}v_{\Om_n}}{\|v_{\Om_n}\|_1}\nonumber \\ &
=\limsup_{n\rightarrow \infty}\frac{c^{m+2}n^{-(m+2)\beta}}{c^{m+2}n^{-(m+2)\beta}+n^{1-(m+2)\alpha}}.
\end{align}
By the hypothesis for $\beta$ under (ii) we have
\begin{equation*}
\kappa\ge \limsup_{n\rightarrow \infty}\frac{c^{m+2}n^{-(m+2)\beta}}{c^{m+2}n^{-(m+2)\beta}+n^{1-(m+2)\alpha}}=1.
\end{equation*}
Both requirements under \eqref{ea12} are satisfied. This concludes the proof of (ii).

\medskip

\noindent To prove (iii) we obtain a lower bound for $\kappa_c$ by choosing the sequence $A_n=B(p_{n+1};cn^{-\beta}),\,n\in \N$. By \eqref{eb10} and the hypothesis for $\beta$ under (iii) we get
\begin{equation}\label{eb12}
\kappa\ge\frac{c^{m+2}}{1+c^{m+2}}.
\end{equation}
To prove the reverse inequality we let $A_n\subset\Om_n,\,n\in\N$ be an arbitrary sequence of measurable sets which satisfy $\lim_{n\rightarrow\infty}\frac{|A_n|}{|\Om_n|}=0.$
By \eqref{eb6}, \eqref{eb7}, and the hypothesis for $\beta$ under (iii),
\begin{align}\label{eb13}
\frac{\int_{A_n}v_{\Om_n}}{\|v_{\Om_n}\|_1}&\le \frac{c^{m+2}}{1+c^{m+2}}+\frac{\omega_m}{2m\rho_m}\frac{|A_n|}{|\Om_n|}\big(c^mn^{-2/(m+2)}+1\big).
\end{align}
By taking the $\limsup_{n\rightarrow\infty}$ in both sides of the inequality in \eqref{eb13},
\begin{equation}\label{eb14}
\limsup_{n\rightarrow\infty}\frac{\int_{A_n}v_{\Om_n}}{\|v_{\Om_n}\|_1}\le \frac{c^{m+2}}{1+c^{m+2}}.
\end{equation}
By taking the supremum over all sequences $(A_n)\in \mathfrak{A}((\Om_n))$ we find by \eqref{eb14},
\begin{equation}\label{eb15}
\kappa\le \frac{c^{m+2}}{1+c^{m+2}}.
\end{equation}
This proves by \eqref{eb12} and \eqref{eb15} that $(v_{\Om_n})$ is $\kappa_c$-localising with $\kappa_c$ given by \eqref{e13}. This concludes the proof of (iii).
\end{proof}

\section{Proof of Theorem \ref{the1} \label{sec3}}

To prove Theorem \ref{the1} we first need to establish some auxiliary results.
It is well known that the torsion function
can be expressed in terms of the heat kernel of $L$.
Let $\Omega \subset \R^m$ be open with
$0 < |\Omega| < \infty$, and let $V:\Omega\rightarrow \R^+$ be measurable. Denote by
$p_{\Omega,V}(x,y;t)$, $x\in \Omega, y\in \Omega,t > 0$ the
heat kernel of
\begin{equation*}
\frac{\partial u}{\partial t} = Lu,  \qquad u\in H_0^1(\Omega).
\end{equation*}
The torsion function of $L$ then satisfies
\begin{equation}\label{e24}
v_{\Omega, V}(x) = \int_{\Omega} dy \int_{\R^+} dt\, p_{\Omega,V}(x,y;t),\qquad \forall x \in \Omega\ .
\end{equation}
In case $V=0$ we write $p_{\Omega}$ for $p_{\Omega,V}$.

Recall the Feynman-Kac formula (\cite{BS}) for non-negative, measurable potentials $V:\Omega\rightarrow \R^+,$
\begin{align*}
p_{\Omega,V}(x,y;t) = &p_{\R^m}(x,y;t)\nonumber \\ & \times \E[e^{- \int_0^t V(\beta(s))ds} \big(\Pi_{s\in[0,t]}{\bf1}_{\Omega}(\beta(s))\big) \, : \,  \beta(0)=x, \beta(t) = y ],
\end{align*}
where $\beta$ is a Brownian bridge.
Hence if $V_1: \Omega \to \R^+$ and $V_2: \Omega \to \R^+$ are measurable functions with $0 \le V_2 \le  V_1$, then

\begin{equation}\label{e26}
0\le p_{\Omega,V_1}(x,y;t)\le p_{\Omega,V_2}(x,y;t),\qquad \forall x\in \Omega, \forall y\in \Omega, \forall t>0.
\end{equation}
Since $V\ge 0$ we then conclude that
\begin{equation*}
0 \le v_{\Omega, V}(x) \le v_{\Omega}(x)\le \mathfrak{c}_m\lambda_1(\Omega)^{-1}, \qquad  \forall x \in \Omega,
\end{equation*}
with $\mathfrak{c}_m$ given by \eqref{e5}.

\begin{lemma}\label{lem1} If $0 < |\Omega|<\infty$, and
if $V: \Omega \to \R^+$ is measurable, then
\begin{equation}\label{e29}
\lambda_{1}(\Omega, V)^{-1} \le
\|v_{\Omega,V}\|_{\infty}
\le (4+3m\log 2)\lambda_{1}(\Omega, V)^{-1}.
\end{equation}
\end{lemma}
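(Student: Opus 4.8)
My plan is to prove the two inequalities in \eqref{e29} separately. The left-hand one I would get by testing the equation $Lv_{\Omega,V}=1$ against the ground state of $L$; the right-hand one by observing that the heat-kernel proof of \eqref{e4} carries over verbatim to the operator $L=-\Delta+V$.

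\emph{Lower bound.} Let $\varphi_1=\varphi_{1,\Omega,V}$ be a first eigenfunction of $L$. Since $|\nabla|u||=|\nabla u|$ a.e. and $\int_\Omega V|u|^2=\int_\Omega Vu^2$, a minimiser of the Rayleigh quotient may be chosen with $\varphi_1\ge0$, $\varphi_1\not\equiv0$, so that $\int_\Omega\varphi_1>0$. Both $v_{\Omega,V}$ and $\varphi_1$ lie in $H_0^1(\Omega)$, hence each may be inserted as a test function into the weak formulation of the other's equation; using the symmetry of the Dirichlet form this gives
\begin{equation*}
\int_\Omega\varphi_1=\int_\Omega\big(\nabla\varphi_1\cdot\nabla v_{\Omega,V}+V\varphi_1 v_{\Omega,V}\big)=\lambda_1(\Omega,V)\int_\Omega\varphi_1 v_{\Omega,V}.
\end{equation*}
Since $0\le v_{\Omega,V}\le v_\Omega$ is bounded (by \eqref{e26} and \eqref{e4}) and $\varphi_1\ge0$, the right-hand side is at most $\lambda_1(\Omega,V)\|v_{\Omega,V}\|_\infty\int_\Omega\varphi_1$; dividing by $\int_\Omega\varphi_1>0$ yields $\lambda_1(\Omega,V)^{-1}\le\|v_{\Omega,V}\|_\infty$.

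\emph{Upper bound.} I would point out that the proof of the right-hand inequality in \eqref{e4} uses only three facts about $-\Delta$ on $\Omega$: the on-diagonal bound $p_\Omega(x,x;t)\le(4\pi t)^{-m/2}$; the $L^2$-decay $\|e^{t\Delta_\Omega}f\|_2\le e^{-\lambda_1(\Omega)t}\|f\|_2$; and the Faber--Krahn inequality $\lambda_1(\Omega)\ge c(m)|\Omega|^{-2/m}$. Each has an exact counterpart for $L$ with $V\ge0$: the Feynman--Kac formula already gives $p_{\Omega,V}(x,y;t)\le p_{\R^m}(x,y;t)=(4\pi t)^{-m/2}e^{-|x-y|^2/(4t)}$, hence the on-diagonal bound with $p_{\Omega,V}$; the spectral theorem applied to the nonnegative self-adjoint operator $L\ge\lambda_1(\Omega,V)$ gives the decay at rate $\lambda_1(\Omega,V)$; and $V\ge0$ forces $\lambda_1(\Omega,V)\ge\lambda_1(\Omega)$, so Faber--Krahn also yields $\lambda_1(\Omega,V)\ge c(m)|\Omega|^{-2/m}$. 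Running the argument behind \eqref{e4} with $p_\Omega,\lambda_1(\Omega)$ replaced by $p_{\Omega,V},\lambda_1(\Omega,V)$ — namely writing $v_{\Omega,V}(x)=\int_0^\infty H(x,t)\,dt$ with $H(x,t)=\int_\Omega p_{\Omega,V}(x,y;t)\,dy\in[0,1]$, bounding $H(x,t)\le|\Omega|^{1/2}p_{\Omega,V}(x,x;2t)^{1/2}$ and interpolating the on-diagonal bound against the exponential decay, splitting $\int_0^\infty$ at a suitable multiple of $\lambda_1(\Omega,V)^{-1}$ (using $H\le1$ before it), and eliminating $|\Omega|$ by Faber--Krahn — reproduces $\|v_{\Omega,V}\|_\infty\le(4+3m\log2)\lambda_1(\Omega,V)^{-1}$ with the identical constant.

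The argument is essentially bookkeeping, with no new idea beyond the $V=0$ case. The one place where I would take care is tracking the numerical constants through the optimisation of the split-point so as to recover exactly $4+3m\log2$ rather than merely some dimensional constant; conceptually there is no obstacle, since the three structural inputs transfer intact.
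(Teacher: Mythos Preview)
Your proposal is correct and follows essentially the same approach as the paper: for the upper bound both you and the authors simply observe that the heat-kernel argument behind \eqref{e4} (Lemmas~1--3 and Theorem~1 of \cite{vdBC}) goes through verbatim with $p_\Omega,\lambda_1(\Omega)$ replaced by $p_{\Omega,V},\lambda_1(\Omega,V)$; for the lower bound both test the equation $Lv_{\Omega,V}=1$ against the first eigenfunction and divide by $\int\varphi_1>0$. The only cosmetic difference is that the paper first passes to the bounded truncations $\Omega_R=\Omega\cap B(0;R)$, obtains $\lambda_1(\Omega_R,V_R)\|v_{\Omega_R,V_R}\|_\infty\ge 1$ there, and then lets $R\to\infty$ using monotonicity, whereas you work directly on $\Omega$ --- which is legitimate since the paper has already recorded that $L$ has discrete spectrum on $\Omega$ with $|\Omega|<\infty$, so a ground state exists.
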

\medskip
Lemma \ref{lem1} implies that
\begin{align}\label{e29a}
\mathfrak{d}_m:&=\sup\{\lambda_1(\Omega,V)\|v_{\Omega,V}\|_{\infty}\nonumber \\ &
\,\,\,\,:\Omega\,\textup{ open in }\,\R^m,\,0<|\Omega|<\infty,\, V:\Omega\rightarrow \R^+, \textup{measurable} \}<\infty.
\end{align}
By choosing $V=0$ in the expression under the supremum in the right-hand side of \eqref{e29a} we see that $\mathfrak{c}_m\le \mathfrak{d}_m$.

\medskip

\noindent{\it Proof of Lemma \textup{\ref{lem1}.}}
To prove the upper bound in \eqref{e29} we note that Lemma 1 and its proof in \cite{vdBC} hold with $\lambda=\lambda_1(\Omega,V)$, and Lemma 2 and its proof in \cite{vdBC} hold for the semigroup associated with $L$. Finally Lemma 3 and Theorem 1 and their proofs in \cite{vdBC} hold with
$\lambda=\lambda_1(\Omega,V)$. This proves the upper bound in \eqref{e29}.
It remains to prove the lower bound.
Let $\Omega_R=\Omega\cap B(0;R)$. Then
$0<|\Omega_R|\le \omega_mR^m$.
Let $L_R$ be the restriction of $-\Delta+V$ acting in $L^2(\Omega_R)$ with Dirichlet boundary conditions on $\partial \Omega_R$.
If we denote $V_R={\bf1}_{B(p;R)}V,$ then $L_R$ is also the operator $-\Delta+V_R$ acting in $L^2(\Omega_R)$ with Dirichlet boundary conditions on $\partial \Omega_R$.
Then $L_R$ is self-adjoint, and its spectrum is discrete. Since the first Dirichlet eigenfunction $\varphi_{1,\Omega_R,V_R}$ is non-negative, and
$\| \varphi_{1,\Omega_R,V_R} \|_2 =1 $,
one has, by the Cauchy-Schwarz inequality,
\begin{equation}\label{e31}
0<\int_{\Omega_R}\varphi_{1,\Omega_R,V_R}\le |\Omega_R|^{1/2}.
\end{equation}
By self-adjointness
\begin{align}\label{e32}
\int_{\Omega_R}\varphi_{1,\Omega_R,V_R}
&=\int_{\Omega_R}\varphi_{1,\Omega_R,V_R}L_Rv_{\Omega_R,V_R}=
\int_{\Omega_R}\big(L_R\varphi_{1,\Omega_R,V_R}\big)v_{\Omega_R,V_R}\nonumber \\
&=\lambda_1(\Omega_R,V_R)\int_{\Omega_R}\varphi_{1,\Omega_R,V_R}v_{\Omega_R,V_R}\nonumber \\
& \le \lambda_1(\Omega_{R},V_R)\|v_{\Omega_R,V_R}\|_{\infty}\int_{\Omega_R}\varphi_{1,\Omega_R,V_R}.
\end{align}
By \eqref{e31} -- \eqref{e32} we conclude that
\begin{equation*}
\lambda_1(\Omega_R,V_R) \, \|v_{\Omega_R,V_R}\|_{\infty} \ge 1.
\end{equation*}
Since $\|v_{\Omega_R,V_R}\|_{\infty}\le \|v_{\Omega,V}\|_{\infty}$ we have
\begin{equation*}
\lambda_1(\Omega_R,V_R) \|v_{\Omega,V}\|_{\infty}\ge 1.
\end{equation*}
The assertion follows since $R\mapsto \lambda_1(\Omega_R,V_R)$ is decreasing to $\lambda_1(\Omega,V)$ as $R\rightarrow\infty$.
\hfill$\square$

\begin{lemma}\label{lem2}
Let $\Omega\subset \R^m$ be open with
$0 <  |\Omega | < \infty$. If $V : \Omega \rightarrow \R^+$ is measurable with
$0 \le c_1 \le V(x) \le c_2 < \infty$, $x \in \Omega$,
then
\begin{align}\label{e35}
 v_{\Omega,c_2}(x) \le
 v_{\Omega,V}(x)\le v_{\Omega,c_1}(x)\ ,
\end{align}
and for any $0 <c < \infty$, $x\in \Omega$,
\begin{align}\label{e36}
 v_{\Omega,c}(x)\ge 2^{-2(3m+4)c/\lambda_1(\Omega)}\frac{\lambda_1(\Omega)}{8c+\lambda_1(\Omega)}\bigg(\frac{8c}{8c+\lambda_1(\Omega)}\bigg)^{8c/\lambda_1(\Omega)}v_{\Omega}(x).
\end{align}
Hence
\begin{equation}\label{e37}
 \|v_{\Omega,c}\|_1\ge 2^{-2(3m+4)c/\lambda_1(\Omega)}\frac{\lambda_1(\Omega)}{8c+\lambda_1(\Omega)}\bigg(\frac{8c}{8c+\lambda_1(\Omega)}\bigg)^{8c/\lambda_1(\Omega)}\|v_{\Omega}\|_1,
\end{equation}
and
\begin{equation}\label{e38}
 \|v_{\Omega,c}\|_{\infty}\ge 2^{-2(3m+4)c/\lambda_1(\Omega)}\frac{\lambda_1(\Omega)}{8c+\lambda_1(\Omega)}\bigg(\frac{8c}{8c+\lambda_1(\Omega)}\bigg)^{8c/\lambda_1(\Omega)}\|v_{\Omega}\|_{\infty}.
\end{equation}
Furthermore the right-hand side of \eqref{e36} converges to $v_{\Omega}(x)$ as $c\downarrow 0$.

\end{lemma}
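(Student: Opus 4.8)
The plan is the following: \eqref{e35} is immediate, the heart of the matter is the pointwise comparison \eqref{e36}, and \eqref{e37}, \eqref{e38} and the $c\downarrow0$ limit will be read off from \eqref{e36}. For \eqref{e35}: since $0\le c_1\le V\le c_2<\infty$, applying the Feynman--Kac monotonicity \eqref{e26} to the pairs of potentials $(c_1,V)$ and $(V,c_2)$ gives $p_{\Omega,c_2}(x,y;t)\le p_{\Omega,V}(x,y;t)\le p_{\Omega,c_1}(x,y;t)$ for all $x,y\in\Omega$ and $t>0$, and integrating in $y$ over $\Omega$ and in $t$ over $\R^+$ via \eqref{e24} yields \eqref{e35}. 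For the rest I introduce the heat content $u_x(t):=\int_\Omega p_\Omega(x,y;t)\,dy$, so that $0\le u_x(t)\le 1$ and $v_\Omega(x)=\int_0^\infty u_x(t)\,dt$; since the Feynman--Kac formula for the constant potential $c$ reduces to $p_{\Omega,c}(x,y;t)=e^{-ct}p_\Omega(x,y;t)$, one also has $v_{\Omega,c}(x)=\int_0^\infty e^{-ct}u_x(t)\,dt$, and \eqref{e36} asks that this Laplace-type transform lose at most the indicated factor relative to its value at $c=0$.

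The one non-elementary input I would use is an explicit decay estimate for the heat content, of the shape
\[
\|u_\cdot(t)\|_\infty:=\sup_{x\in\Omega}\int_\Omega p_\Omega(x,y;t)\,dy\ \le\ A_m\,e^{-\lambda_1(\Omega)t/8},\qquad t>0,
\]
valid for a suitable explicit constant $A_m$; carrying the constants through one may take $A_m=2^{(3m+4)/4}$, which is exactly what makes the final bound come out in the form \eqref{e36}. This estimate is proved by the same mechanism as the upper bound in Lemma \ref{lem1} and in \cite{vdBC}: Gaussian domination $p_\Omega\le p_{\R^m}$ to seed the argument, the semigroup (Chapman--Kolmogorov) identity together with the $L^2$ decay at rate $\lambda_1(\Omega)$ of the Dirichlet heat semigroup, and the Faber--Krahn inequality to bound $|\Omega|$ in terms of $\lambda_1(\Omega)$.

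Granting this, fix $x\in\Omega$. For $s,T>0$ the semigroup identity gives, all terms being non-negative,
\[
u_x(s+T)=\int_\Omega p_\Omega(x,z;s)\Bigl(\int_\Omega p_\Omega(z,y;T)\,dy\Bigr)dz=\int_\Omega p_\Omega(x,z;s)\,u_z(T)\,dz\ \le\ \|u_\cdot(T)\|_\infty\,u_x(s),
\]
so integrating in $s$ over $\R^+$ and using the decay estimate,
\[
\int_T^\infty u_x(t)\,dt\ \le\ \|u_\cdot(T)\|_\infty\,v_\Omega(x)\ \le\ A_m\,e^{-\lambda_1(\Omega)T/8}\,v_\Omega(x).
\]
Hence $\int_0^T u_x(t)\,dt\ge\bigl(1-A_m e^{-\lambda_1(\Omega)T/8}\bigr)v_\Omega(x)$, and since $e^{-ct}\ge e^{-cT}$ on $[0,T]$,
\[
v_{\Omega,c}(x)\ \ge\ \int_0^T e^{-ct}u_x(t)\,dt\ \ge\ e^{-cT}\bigl(1-A_m e^{-\lambda_1(\Omega)T/8}\bigr)\,v_\Omega(x),\qquad T>0.
\]
Maximising the elementary function $T\mapsto e^{-cT}\bigl(1-A_m e^{-\lambda_1(\Omega)T/8}\bigr)$ over $T>0$ — the maximiser is $T^{\ast}=\tfrac{8}{\lambda_1(\Omega)}\log\tfrac{A_m(8c+\lambda_1(\Omega))}{8c}$ — gives the value $A_m^{-8c/\lambda_1(\Omega)}\bigl(\tfrac{8c}{8c+\lambda_1(\Omega)}\bigr)^{8c/\lambda_1(\Omega)}\tfrac{\lambda_1(\Omega)}{8c+\lambda_1(\Omega)}$, which with $A_m^{-8c/\lambda_1(\Omega)}=2^{-2(3m+4)c/\lambda_1(\Omega)}$ is exactly the constant in \eqref{e36}. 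Then \eqref{e37} follows by integrating \eqref{e36} over $\Omega$, \eqref{e38} follows by taking the supremum over $x\in\Omega$ (the constant being positive), and, writing $a=8c/\lambda_1(\Omega)$, the constant in \eqref{e36} equals $2^{-(3m+4)a/4}(1+a)^{-1}\bigl(a/(1+a)\bigr)^{a}$, which tends to $1$ as $a\downarrow0$, so the right-hand side of \eqref{e36} converges to $v_\Omega(x)$ as $c\downarrow0$.

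I expect the main obstacle to be the decay estimate $\|u_\cdot(t)\|_\infty\le A_m e^{-\lambda_1(\Omega)t/8}$ with an explicit dimensional constant: one must push the constants through the Gaussian-domination/semigroup/Faber--Krahn chain carefully enough that the calculus maximisation above reproduces exactly the constant displayed in \eqref{e36}. Everything else — the sandwich \eqref{e35}, the relative tail bound via positivity of the heat kernel, the one-variable optimisation, and passing to $L^1$, $L^\infty$ and to the limit $c\downarrow0$ — is routine.
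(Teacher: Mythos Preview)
Your proposal is correct and follows essentially the same route as the paper: both use $p_{\Omega,c}=e^{-ct}p_\Omega$, truncate the time integral at $T$, bound the tail $\int_T^\infty u_x(t)\,dt$ by $2^{(3m+4)/4}e^{-\lambda_1(\Omega)T/8}v_\Omega(x)$ via Chapman--Kolmogorov together with the heat-content decay coming from the kernel estimate of \cite{vdBC} (the paper's \eqref{e41}--\eqref{e42}), and then optimise in $T$. Your packaging via the submultiplicativity $u_x(s+T)\le\|u_\cdot(T)\|_\infty\,u_x(s)$ is a touch cleaner than the paper's explicit $t/2$-splitting, and the decay estimate you need is exactly \eqref{e42} (no Faber--Krahn is required there), but the argument is otherwise identical.
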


\begin{proof}
The two inequalities in \eqref{e35} follow immediately from \eqref{e26}, and the hypothesis $0\le c_1 \le V \le c_2$. To prove inequality \eqref{e36}, note that for any $T>0$, and any $c\ge 0$,
\begin{align}\label{e39}
v_{\Omega,c}(x)&=\int_{\Omega}dy\,\int_{\R^+}dt\, p_{\Omega,c}(x,y;t)\nonumber \\ &
=\int_{\Omega}dy\,\int_{\R^+}dt\, e^{-ct}p_{\Omega}(x,y;t)\nonumber \\ &
\ge \int_{\Omega}dy\,\int_0^Tdt\, e^{-ct}p_{\Omega}(x,y;t)\nonumber \\ &
\ge e^{-cT}\int_{\Omega}dy\,\int_0^Tdt\, p_{\Omega}(x,y;t)\nonumber \\ &
=e^{-cT}\Big(v_{\Omega}(x)-\int_T^{\infty}dt\,\int_{\Omega}dy\,p_{\Omega}(x,y;t)\Big).
\end{align}
The double integral in the right-hand side of \eqref{e39} is estimated using the heat semigroup property and Tonelli's Theorem,
\begin{align}\label{e40}
\int_T^{\infty}dt\,\int_{\Omega}dy\,p_{\Omega}(x,y;t)&=\int_T^{\infty}dt\,\int_{\Omega}dy\,\int_{\Omega}dz\,p_{\Omega}(x,z;t/2)p_{\Omega}(z,y;t/2)\nonumber \\ &
\le\int_T^{\infty}dt\,\int_{\Omega}dz\,p_{\Omega}(x,z;t/2)\int_{\Omega}dy\,p_{\Omega}(z,y;t/2).
\end{align}
Lemma 3 in \cite{vdBC} asserts that
\begin{equation}\label{e41}
p_{\Om}(z,y;t)\le (4\pi t)^{-m/2}2^{m/4}e^{-t\lambda_1(\Om)-|z-y|^2/(8t)}.
\end{equation}
This gives
\begin{align}\label{e42}
\int_{\Om}dy\,p_{\Om}(z,y;t)&\le (4\pi t)^{-m/2}2^{m/4}\int_{\R^m}dy\,e^{-t\lambda_1(\Om)/4-|z-y|^2/(8t)}\nonumber \\ &
=2^{3m/4}e^{-t\lambda_1(\Om)/4}.
\end{align}
By \eqref{e40} and \eqref{e42},
\begin{align}\label{e43}
\int_T^{\infty}dt\,\int_{\Omega}dy\,p_{\Omega}(x,y;t)&\le2^{3m/4}\int_T^{\infty}dt\,e^{-t\lambda_1(\Om)/8}\int_{\Omega}dz\,p_{\Omega}(x,z;t/2)\nonumber \\ &
\le 2^{3m/4}e^{-T\lambda_1(\Om)/8}\int_{\R^+}dt\,\int_{\Omega}dz\,p_{\Omega}(x,z;t/2)\nonumber \\ &
=2^{(4+3m)/4}e^{-T\lambda_1(\Om)/8}v_{\Om}(x).
\end{align}
By combining \eqref{e39} and \eqref{e43} we find
\begin{equation}\label{e47}
v_{\Omega,c}(x)\ge e^{-cT}\big(1-2^{(4+3m)/4}e^{-T\lambda_1(\Omega)/8}\big)v_{\Omega}(x).
\end{equation}
Choosing $T$ as to maximise the right-hand side of \eqref{e47} gives that
\begin{equation}\label{e48}
T=\frac{8}{\lambda_1(\Omega)}\log\Big(2^{(4+3m)/4}\Big(1+\frac{\lambda_1(\Omega)}{8c}\Big)\Big).
\end{equation}
Inequality \eqref{e36} then follows by \eqref{e47} and \eqref{e48}. The inequalities \eqref{e37} and \eqref{e38} follow immediately from \eqref{e36}.
\end{proof}

\noindent{\it Proof of Theorem \textup{\ref{the1}.}}
(i) Since $0\le V\le c$ we have by \eqref{e35},
\begin{equation}\label{e56}
\| v_{\Omega,V} \|_1  \le \| v_{\Omega} \|_1,
\end{equation}
and
\begin{equation}\label{e57}
\|v_{\Omega,V}\|_{\infty}\ge \|v_{\Omega,c}\|_{\infty}.
\end{equation}
The upper bound \eqref{e7} follows from \eqref{e56}, \eqref{e57}, and \eqref{e38}.

Similarly, by \eqref{e35}, we have
\begin{equation}\label{e58}
\|v_{\Omega,V}\|_{\infty}\le \|v_{\Omega}\|_{\infty},
\end{equation}
and
\begin{align}\label{e59}
\|v_{\Omega,V}\|_1\ge\|v_{\Omega,c}\|_1.
\end{align}
The lower bound \eqref{e8} follows from \eqref{e58}, \eqref{e59}, and \eqref{e37}.

\medskip

\noindent{\textup{(ii)}}
By using the inequality
\begin{equation*}
\big(1+\theta^{-1}\big)^{\theta}\le e, \qquad \forall\theta>0,
\end{equation*}
with $\theta=8\Vert V_n\Vert_{\infty}/{\lambda_1(\Omega_n)}$, we obtain by \eqref{e7},
\begin{equation*}
\Phi(\Omega_n,V_n)\le e2^{2(3m+4)\eta}(1+8\eta)\Phi(\Omega_n),
\end{equation*}
and by \eqref{e8},
\begin{equation*}
\Phi(\Omega_n,V_n)\ge e^{-1}2^{-2(3m+4)\eta}(1+8\eta)^{-1}\Phi(\Omega_n).
\end{equation*}

\medskip

\noindent{\textup{(iii)}}
To verify \eqref{e50} we have by Lemma 4 in \cite{MvdBS},
$$\int_{\Omega}dy\, p_{\Omega}(x,y;t)\ge 1-2^{(m+2)/2}e^{-d_{\Omega}(x)^2/(8t)}.$$
Hence we find by \eqref{e24}
\begin{equation}\label{e53}
\int_{\Omega}dx\,\int_{\Omega}dy\, p_{\Omega}(x,y;t)\ge |\Omega|-2^{(m+2)/2}\int_{\Omega}dx\,e^{-d_{\Omega}(x)^2/(8t)}.
\end{equation}
Multiplying both sides of \eqref{e53} by $e^{-ct}$ and integrating with respect to $t$ gives
\begin{align}\label{e54}
\| v_{\Omega,c} \|_1&\ge \frac{|\Omega|}{c}-2^{(m+2)/2}\int_{\Omega}dx\,\int_{\R^+}dt\, e^{-ct-d_{\Omega}(x)^2/(8t)}\nonumber \\ &
\ge \frac{|\Omega|}{c}-2^{(m+2)/2}\int_{\Omega}dx\,\int_{\R^+}dt\, e^{-ct/2}\sup\{e^{-ct/2-d_{\Omega}(x)^2/(8t)}:t>0\}\nonumber \\ &
=\frac{|\Omega|}{c}-\frac{2^{(m+4)/2}}{c}\int_{\Omega}dx\,e^{-c^{1/2}d_{\Omega}(x)/2}.
\end{align}
On the other hand using \eqref{e42},
\begin{align}\label{e55}
\| v_{\Omega,c} \|_\infty &=\sup_{x\in\Omega}\int_{\R^+}dt\, e^{-ct}\int_{\Omega}dy\, p_{\Omega}(x,y;t)\nonumber \\ &
\le\sup_{x\in\Omega}\int_{\R^+}dt\, e^{-ct}\int_{\R^m}dy\, p_{\R^m}(x,y;t)\nonumber \\ &
=\frac{1}{c}.
\end{align}
The second inequality in \eqref{e50} follows from \eqref{e54} and \eqref{e55}. Finally, \eqref{e51} follows from \eqref{e50}, and Lebesgue's dominated convergence theorem,
while \eqref{e52} follows from \eqref{e8} and (i).
\hfill  $\square$

\medskip

\medskip

\section{Proof of Theorem \ref{the7} }\label{sec4}

\medskip

\noindent{\it Proof of Theorem \textup{\ref{the7}}.} The proof follows a method from \cite{HLP}, which can be summarised as follows: construct a  measure $\mu$  so that
 the efficiency of the eigenfunction of the first eigenvalue associated to $- \Delta + \mu$ almost equals $1$
 and then approximate  $\mu$  in the sense
of $\gamma$-convergence by a sequence of domains.
We refer the reader to \cite[Definition 4.8]{DMM87} (see also \cite[Chapter 4]{BB05}) for the notions of
$\gamma$-convergence, relaxed Dirichlet problems and approximations by sequences of domains. Recall that a measure on a domain is said to be capacitary if it is nonnegative, not necessarily finite and Borel, and which is in addition absolutely continuous with respect to the capacity. One can then define a relaxed Dirichlet problem (see \cite[Definition 3.1]{DMM87}, and \cite[ Sections 4.3 and  3.6]{BB05}). The equations $-\Delta v +\mu v=1$ and $-\Delta v+\mu v =\lambda v$ can then be solved in the weak sense. The sequence of sets $\Om_n\subset B_1$ is said to $\gamma$-convergence to the capacitary measure $\mu$ if the sequence of weak solutions  $v_n \in H^1_0(\Om_n)$  of $-\Delta v_n=1$ converges strongly in $L^2(B_1)$ to the weak solution $v \in H^1_0(B_1)\cap L^2(\mu)$ of $-\Delta v +\mu v=1$. As a consequence of the $\gamma$-convergence, the sequence of eigenvalues and corresponding eigenfunctions on the moving domain $(\Omega_n)$ converge in a suitable sense
 The $\gamma$-convergence is metrisable.

Let $\ov B_r$ denote the closure of $B_r$.
For any $0 < \varepsilon < 1$,
we consider the Dirichlet-Neumann eigenvalue problem on the annulus $A_\vps =B_1\sm \ov B_{1-\vps}$. Denote by $\lambda_\vps$ the first eigenvalue
and by $u_\vps$ a corresponding eigenfunction,

\begin{equation*}
\begin{cases}
-\Delta u_\vps =\lambda_\vps u_\vps,&\text{in }A_\vps,\\
{ u_\vps}=0,&\text{on }\partial B_{1},\\
\frac{\partial { u_\vps}}{\partial \nu}= 0,&\text{on }\partial B_{1-\vps},
\end{cases}
\end{equation*}
where $\nu$ denotes the inward-pointing normal on the sphere $\partial B_{1-\vps}$.
One can show that $\lb_\vps$ is simple and strictly positive  and that $u_\vps$ is radially symmetric and has a constant sign, say positive.
In particular, the restriction of $u_\vps$ to $\partial B_{1-\vps}$ equals a positive constant, $c_\vps > 0$.
We continuously extend $u_\vps$ inside $B_{1-\vps}$ by  $c_\vps$ and denote the resulting function, defined on $\ov B_1$, by $v_\vps$.

\medskip

\medskip

Since the normal derivatives of $v_\vps$ on both sides of $\partial B_{1-\vps}$ vanish, $\Delta  v_\vps$ is an $L^2$-function. More precisely, one has
$$
-\Delta v_\vps= \lb_\vps u_\vps 1_{A_\vps}, \quad \mbox  {in } {\mathcal D}'(B_1),
$$
where ${\mathcal D}'(B_1)$ denotes the space of distributions on $B_1$.
By adding on both sides the $L^2$ function $ \lb_\vps v_\vps 1_{\ov B_{1-\vps}}$ we get \\
$$-\Delta v_\vps+  \lb_\vps v_\vps 1_{\ov B_{1-\vps}}= \lb_\vps v_\vps,\quad \mbox  {in } {\mathcal D}'(B_1).$$
We view $\mu :=  \lb_\vps 1_{\ov B_{1-\vps}}$ as a capacitary measure on $B_1$. Then formally, we get
$$
-\Delta v_\vps+ \mu  v_\vps = \lb_\vps v_\vps,\quad \mbox  {in } {\mathcal D}'(B_1).
$$
Combined with the fact that $v_\vps >0$, this means that $\lb_\vps$ is the first Dirichlet eigenvalue of $- \Delta + \mu$.
We assume from now on that $v_\vps$ is normalised by $\|v_\vps\|_{L^2(B_1)}=1$.
Since the measure $\mu$ is finite with support in $B_1$, the first eigenvalue $\lambda_\vps$ is simple.

In view of the Dal Maso-Mosco density result \cite[Theorem 4.16]{DMM87}, there exists a sequence $\Om_n\sq B_1$, $n \ge 1$, such that   $\Om_n $ $\gamma$-converges to $\mu$. We can assume that
the boundary $\partial \Om_n$ of $\Om_n$ is smooth.
Indeed, otherwise we can replace each $\Om_n$ by an inner approximation with a smooth open set, which is close enough in  the sense of the distance associated to the $\gamma$-convergence. Furthermore, we can also assume that $\Om_n$ is connected. Indeed, since $\Om_n$ is smooth, it has a finite number of smooth connected components, which are separated from each other by a positive distance. These components can be joined by a finite number of thin tubes, connecting the set $\Om_n$. As the width of the tubes vanishes, the sequence $\gamma$-converges by Sverak's theorem (see \cite[Theorem 4.7.1]{BB05}).

It is possible to explicitly construct such a sequence $(\Omega_n)$ in the spirit of Cioranescu-Murat \cite{CM}, but such a construction is not needed for the rest of the proof.
The only fact we need to keep in mind is that $\Om_n \sq B_1$, so that $|\Om_n| \le |B_1|$.
Denote by $\lambda_1(\Omega_n)$ the first Dirichlet eigenvalue of $-\Delta$ on $\Omega_n$ and by $u_n$ the $L^2-$normalised, positive eigenfunction corresponding to $\lambda_1(\Omega_n)$.
We extend $u_n$ to $ B_1$ by setting it to $0$ on  $B_1 \setminus \ov \Omega_n$ and by a slight abuse of notation, denote this extension again by $u_n$.
The $\gamma$-convergence of $(\Omega_n)$, together with the compact embedding of $H^1_0(B_1)$ in $L^1(B_1)$,  imply that (i) $\lb_1(\Om_n) \rightarrow \lb_\vps$, and
(ii) $u_n\rightharpoonup v_\vps$ weakly in $H^1_0(B_1)$ and strongly in $L^1(B_1)$.
Assuming  that
\begin{equation}\label{L infty}
\lim_{n \to \infty }\|u_n\|_\infty = \|v_\vps\|_\infty.
\end{equation}
 we get
$$
\liminf_{n \rightarrow +\infty} \frac{\int_{\Om_n} u_n }{\|u_n\|_\infty |\Om_n|} \ge  \frac{\int_{B_1} v_\vps }{\|v_\vps\|_\infty |B_1|}.
$$
Since the right-hand side is arbitrarily close to $1$ when $\vps \downarrow 0$, the proof of Theorem \ref{the7} is then completed by a diagonal selection procedure.

It remains to show \eqref{L infty}. This kind of assertion is known to be true in a general setup. In essence it is a consequence
of the subharmonicity of the eigenfunctions $u_n$, $n \ge 1$. (For a similar result for the torsion function see \cite[Theorem 2.2]{HLP}.)
For the sake of completeness, we give below a proof. It slightly differs from the one in \cite[Theorem 2.2]{HLP}.

First note that since $u_n \to v_\vps$ strongly in $L^1$, it follows that
$$
\liminf _{n \to +\infty} \|u_n\|_\infty  \ge  \|v_\vps\|_\infty .
$$
To prove that $ \limsup _{n \to +\infty} \|u_n\|_\infty  \le  \|v_\vps\|_\infty $ we argue as follows.
Being convergent,  the sequence $(\lb_1(\Omega_n))$ is bounded, and so is $\| u_n \|_\infty$.
Choose $M>0$ so that  for any $n \in \N$
$$\lb_1(\Omega_n) u_n(x)  \le M, \quad \forall \ x \in B_1,$$
and therefore
$$
-\Delta u_n(x)  \le M ,  \qquad \forall \ x \in B_1 .
$$
Let $x_n \in B_1$ be a maximum point for $u_n$. By taking, if necessary, a subsequence we may assume that $x_n \to x^*$.
Furthermore,
$$-\Delta u_n \le M= M \Delta \frac{|x-x_n|^2}{2m},  \quad \mbox  {in } {\mathcal D}'( \R^m),$$
or
$$-\Delta \Big (u_n + M  \frac{|x-x_n|^2}{2m}\Big)\le 0, \quad \mbox  {in } {\mathcal D}'( \R^m).$$
By the subharmonicity of the function $x \mapsto u_n(x) + M  \frac{|x-x_n|^2}{2m}$ around $x_n$, it then follows that for
$\delta >0$ sufficiently small,
$$\|u_n \|_\infty = u_n(x_n) \le \frac{\int _{B(x_n; \delta)}dx \big( u_n(x) + M  \frac{|x-x_n|^2}{2m}\big) }{|B(x_n; \delta)|}.$$
Taking the limit $n \to +\infty$ we obtain
$$\limsup _{n \to +\infty} \|u_n\|_\infty \le \frac{\int _{B(x^*; \delta)}dx\big(v_\vps(x) + M  \frac{|x-x^*|^2}{2m}\big)}{|B(x^*;\delta)|},$$
or
$$\limsup _{n \to +\infty} \|u_n\|_\infty \le \frac{\|v_\vps\|_\infty |B(x^*; \delta)| + M  \frac{\delta^2}{2m} |B(x^*; \delta)|}{|B(x^*; \delta)|}
= \|v_\vps\|_\infty + M  \frac{\delta^2}{2m}.$$
Letting $\delta \downarrow 0$, completes the proof.
\hfill$\square$

\bigskip

\begin{remark}\label{rem0}
If $\Omega$ is an open connected subset of $\R^m$ with $m \ge 2$ and $0<|\Omega|<\infty$, then
\begin{enumerate}
\item[\textup{(i)}]
\begin{equation}\label{p1}
\Phi(\Omega)\ge \frac{E(\Omega)}{1+k_m\lambda_1(\Omega)^{m/4}|\Omega|^{1/2}\big(1-E(\Omega)\big)^{1/2}},\hspace{14mm} m=2,3,
\end{equation}
\item[\textup{(ii)}]
\begin{equation}\label{p2}
\Phi(\Omega)\ge \frac{E(\Omega)}{1+k_m\lambda_1(\Omega)|\Omega|^{2/m}\big(1-E(\Omega)\big)^{1/(m-1)}},\qquad m\ge 4,
\end{equation}
\end{enumerate}
where
\begin{equation}\label{p0}
k_m=
\begin{cases}
2(8\pi)^{-m/4}\Gamma((4-m)/4),\hspace {38mm} m=2,3,\\
\pi^{-1}(m-2)^{-1}m^{-1/(m-1)}\big(\Gamma((m+2)/2)\big)^{2/m},\qquad m\ge 4.
\end{cases}
\end{equation}
\end{remark}

\begin{proof}
Putting $V=0$ in \eqref{e24} one obtains by using  \eqref{e41},
\begin{align}\label{p3}
v_{\Omega}(x)&\ge \int_{\Omega} dy \int_{\R^+} dt\, p_{\Omega}(x,y;t)\frac{\varphi_{1,\Omega}(y)}{\|\varphi_{1, \Omega}\|_{\infty}}\nonumber \\ &
=\lambda_{1}(\Omega)^{-1}\frac{\varphi_{1,\Omega}(x)}{\|\varphi_{1, \Omega}\|_{\infty}}.
\end{align}
Integrating both sides of \eqref{p3} yields,
\begin{equation}\label{p4}
\|v_{\Omega}\|_1\ge \lambda_{1}(\Omega)^{-1}\frac{\|\varphi_{1,\Omega}\|_1}{\|\varphi_{1, \Omega}\|_{\infty}}.
\end{equation}
To obtain the stated lower bound for $\Phi(\Omega)$ it remains to find an upper bound for $\|v_{\Omega}\|_{\infty}$.

First consider case (i).
By \eqref{p3}, the Cauchy-Schwarz inequality, and the heat semigroup property, one sees that
\begin{align}\label{p5}
v_{\Omega}(x)-\lambda_{1}&(\Omega)^{-1}\frac{\varphi_{1,\Omega}(x)}{\|\varphi_{1, \Omega}\|_{\infty}}
=\int_{\Omega} dy \int_{\R^+} dt\, p_{\Omega}(x,y;t)\bigg(1-\frac{\varphi_{1,\Omega}(y)}{\|\varphi_{1, \Omega}\|_{\infty}}\bigg)\nonumber \\ &
\le \int_{\R^+} dt\, \bigg(\int_{\Omega} dy(p_{\Omega}(x,y;t))^2\bigg)^{1/2}\bigg(\int_{\Omega} dy\bigg(1-\frac{\varphi_{1,\Omega}(y)}{\|\varphi_{1, \Omega}\|_{\infty}}\bigg)\bigg)^{1/2}\nonumber \\ &
=|\Omega|^{1/2}\int_{\R^+} dt\,p_{\Omega}(x,x;2t)^{1/2}\big(1-E(\Omega)\big)^{1/2}.
\end{align}
Choosing $\beta=\frac12$ in Lemma 1 of \cite{vdBC} gives by domain monotonicity of the Dirichlet heat kernel, and \eqref{e42}
\begin{equation}\label{p6}
p_{\Omega}(x,x;2t)\le e^{-t\lambda_1(\Omega)}(4\pi t)^{-m/2}.
\end{equation}
Substitution of \eqref{p6} into the right-hand side of \eqref{p5}, evaluating the resulting integral with respect to $t$, and taking the supremum over all $x\in\Omega$ gives
\begin{equation}\label{p7}
\|v_{\Omega}\|_{\infty}-\lambda_{1}(\Omega)^{-1}\le 2(8\pi)^{-m/4}|\Omega|^{1/2}\Gamma((4-m)/4)\lambda_1(\Omega)^{-1 + m/4}\big(1-E(\Omega)\big)^{1/2}.
\end{equation}
Inequality \eqref{p1} follows from \eqref{p4}, \eqref{p7} with the values for $k_2$ and $k_3$ given in \eqref{p0}.

Next consider case (ii). By the first equality in \eqref{p5} we have by domain monotonicity of the Dirichlet heat kernel
\begin{align*}
v_{\Omega}(x)-\lambda_{1}&(\Omega)^{-1}\frac{\varphi_{1,\Omega}(x)}{\|\varphi_{1, \Omega}\|_{\infty}}
\le \int_{\Omega} dy \int_{\R^+} dt\, p_{\R^m}(x,y;t)\bigg(1-\frac{\varphi_{1,\Omega}(y)}{\|\varphi_{1, \Omega}\|_{\infty}}\bigg) \nonumber \\ &
=c_m\int_{\Omega} dy |x-y|^{2-m}\bigg(1-\frac{\varphi_{1,\Omega}(y)}{\|\varphi_{1, \Omega}\|_{\infty}}\bigg),
\end{align*}
where
\begin{equation*}
c_m=\frac{\Gamma((m-2)/2)}{4\pi^{m/2}}.
\end{equation*}
By H\"older's inequality with exponents $p=\frac{m-1}{m-2}$ and $q=m-1$ we have
\begin{align*}
v_{\Omega}(x)-&\lambda_{1}(\Omega)^{-1}\frac{\varphi_{1,\Omega}(x)}{\|\varphi_{1,\Omega}\|_{\infty}}\nonumber \\ &
\le c_m\bigg(\int_{\Omega}\frac{dy}{|x-y|^{m-1}}\bigg)^{(m-2)/(m-1)}\bigg( \int_\Omega dy \big(1-\frac{\varphi_{1,\Omega}(y)}{\|\varphi_{1, \Omega}\|_{\infty}} )\bigg)^{1/(m-1)}\nonumber \\ &
\le c_m\bigg(\int_{\Omega^*}\frac{dy}{|y|^{m-1}}\bigg)^{(m-2)/(m-1)}\bigg(\int_\Omega dy \big(1-\frac{\varphi_{1,\Omega}(y)}{\|\varphi_{1, \Omega}\|_{\infty}} \big)\bigg)^{1/(m-1)}\nonumber \\ &
=c_m \big(m\omega_m R^*\big)^{(m-2)/(m-1)}|\Omega|^{1/(m-1)}\big(1-E(\Omega)\big)^{1/(m-1)},
\end{align*}
where we have used Schwarz symmetrisation with $\Omega^*=B_{R^*}$, and $\omega_m (R^*)^m=|\Omega|.$

Taking the supremum over all $x\in \Omega$ and using the formulae for $c_m$ and $R^*$ gives,
\begin{align*}
\|v_{\Omega}\|_{\infty}\le\lambda_{1}(\Omega)^{-1}&+\pi^{-1}(m-2)^{-1}m^{-1/(m-1)}\big(\Gamma((m+2)/2)\big)^{2/m}\nonumber \\ &\times|\Omega|^{2/m}\big(1-E(\Omega)\big)^{1/(m-1)}.
\end{align*}
This, together with \eqref{p4}, implies the assertion for $m\ge 4$.
\end{proof}
\medskip

We see from the proof of Remark \ref{rem0}(ii) that the case $m=3$ could also have been included.
However, that would have given $\lambda_1(\Omega)|\Omega|^{2/3}$ in the denominator.
By Theorem \ref{the9} (iii) we have, for $m=3$, that $\lambda_1(\Omega)|\Omega|^{2/3}\gg1$ if $E(\Omega)$ is close to $1$.
Then $\lambda_1(\Omega)^{3/4}|\Omega|^{1/2}\ll\lambda_1(\Omega)|\Omega|^{2/3},$ and so \eqref{p1} gives a better bound in that case.
However, bounds \eqref{p1} and \eqref{p2} do not imply that if $\Phi(\Omega)$ is close to $1$ then $E(\Omega)$ is close to $1$ since, by Theorem \ref{the9} (iii),
$\lambda_1(\Omega)|\Omega|^{2/m}$ becomes large.

\medskip

\medskip

\section{Proof of Theorem \ref{the9} }\label{sec5}

\medskip

\noindent
{\em  Proof of Theorem \textup{\ref{the9}(i)}}.
Since $\|u\|_{\infty}>0$ we can re-scale both $u$ and $\Omega$,
such that $\|u\|_{\infty}=1$, and $|\Omega|=\omega_m$. Inequality \eqref{p22}
then reads
\begin{equation}\label{p25}
\bigg( 1 - \fint_{\Omega}u\bigg)\int_{\Omega}|\nabla u|^2\ge \frac{4m^2}{(m+2)^2}\omega_m,
\end{equation}
with $|\Omega|=\omega_m, \,  \|u\|_\infty = 1,$ and  $\fint_{\Omega}u\ge \frac{2}{m+2}.$
Note that replacing $u$ by its positive part $u^{+}$ decreases the left-hand side of \eqref{p25}, and furthermore,
$u^{+}\le 1,\,\fint_{\Omega} u^+ \ge \frac{2}{m+2}$.
So it suffices to prove that for any $m \ge 2,$
\begin{equation*}
(1-\theta) F(\theta) \ge \frac{4m^2}{(m+2)^2}\omega_m \, , \qquad  \forall \, \theta\in [2/(m+2),1),
\end{equation*}
where
$$
F(\theta):= \inf\big\{\int_{\Omega}|\nabla u|^2: \, u\in H_0^{1}(\Omega), 0 \le u \le 1,\fint_{\Omega}u=\theta \big\} \, .
$$

We make some preliminary observations. By Schwarz rearrangement we may consider the infimum in the definition of $F$ over the collection $H_0^{*1}(B_1)$ of all radially symmetric, decreasing functions $u$ in $H_0^1(B_1)$
since this rearrangement decreases the energy and leaves the other constraints unchanged.
So,
$$
F(\theta)  \ge  F^*(\theta), \quad 2/(m+2) \le \theta < 1,
$$
where
\begin{equation}\label{minimization 2}
 F^*(\theta) =   \inf\big\{\int_{B_1}|\nabla u|^2 : \, u \in H_0^{*1}(B_1), 0 \le u \le 1,\,\fint_{B_1}u=\theta \big\}.
\end{equation}
First note that
$$
  \inf\big\{\int_{B_1}|\nabla u|^2 : \, u \in H_0^{*1}(B_1), \,  \fint_{B_1}u=\theta \big\},
$$
admits a minimiser. By the Lagrange multiplier theorem, there exists a constant $c$ such that
$$
- \Delta u = c\, , \qquad u \in H^{*1}_0(B_1)\, , \qquad \fint_{B_1} u = \theta,
 $$
Since $u$ is radially symmetric and decreasing,
in the sequel, by a slight abuse of notation, we write $u(r)$ instead of $u(x)$.
By a straightforward computation one sees that for any $2/(m+2) \le \theta < 1$
\begin{equation}\label{unconstrained}
u(r) = 2^{-1}(m+2)\theta(1 - r^2).
\end{equation}
In particular, $\|u \|_\infty = u(0)\ge 1$.
Note that we could have written $0\le u,\, \| u\|_\infty =1$ instead of $0\le u\le 1$ in the right-hand side of \eqref{minimization 2}.

For any $2/(m+2) \le \theta <1,$  $ F^*(\theta)$  admits a minimiser.
Since the Dirichlet energy is strictly convex, it is unique, and we denote it by $u_\theta$.
Let
\begin{equation*}
f(\theta) = (1- \theta)F^*(\theta).
\end{equation*}
Since $F(\theta) \ge F^*(\theta)$, it suffices to show that
\begin{equation}\label{to be proved}
f(\theta) \ge \frac{4m^2}{(m+2)^2}\omega_m\, , \qquad \ \forall  \theta \in [2/(m+2) , 1)\, .
\end{equation}
If $\theta =  \frac{2}{m+2}$, then by \eqref{unconstrained},  $u_{2/(m+2)} (r) = 1- r^2$. Hence
\begin{equation}\label{minimum}
 f(2/(m+2)) = \Big(1- \frac{2}{m+2}\Big) \int_{B_1} (-2r)^2 mr^{m-1} \omega_m dr = \frac{ 4m^2}{(m+2)^2} \omega_m.
\end{equation}
Hence \eqref{to be proved} is satisfied for $\theta =  \frac{2}{m+2}$.

\medskip

The remaining part of the proof consists of four parts.
In part (a) we show that \eqref{to be proved} holds  for  any $m\ge 2$ and $\theta\in [\frac{2}{m+2},\theta_m^*]$  where
\begin{equation}\label{p27}
\theta_m^* :=\frac{m+\big(m^2+8m\big)^{1/2}}{2(m+2)}.
\end{equation}
We note that $2/(m+2) <  2/3 < \theta_m^* < 1$ for any $m \ge 2$.
In part (b) we prove that \eqref{to be proved} holds for any $m\ge 4$ and $\theta\in [\theta_m^*,1)$.
In part (c) we show that \eqref{to be proved} holds for $m=2,3$,  using the Euler-Lagrange equation of a variational problem, related
to an obstacle problem. See \eqref{np1} below.
Finally, in part (d) we verify that {\em equality} in \eqref{p22} holds if and only if $\Omega$ is a ball and $u$ is a multiple of the torsion function for that ball.
This completes the proof of Theorem \ref{the9} (i).

\medskip

(a) Assume that $\frac{2}{m+2} \le \theta \le \theta_m^*.$
In a first step we prove that $\theta\mapsto f(\theta)$ is increasing on $[\frac{2}{m+2},\frac23]$ which by \eqref{minimum}
implies that \eqref{to be proved} holds for $\theta$ in the latter interval.
Given $\frac{2}{m+2} < \theta < 1$, choose any $\varepsilon>0$ with the property
\begin{equation*}
\frac{\theta}{1+\varepsilon}>\frac{2}{m+2}.
\end{equation*}
Let $\overline{u}_\theta$ be the minimiser of the obstacle problem
\begin{equation*}
\inf\{\int_{B_1}|\nabla u|^2: \, \, u\in  H_0^{*1}(B_1), \, 0\le u \le 1+\varepsilon, \, \fint_{B_1} u=\theta\}.
\end{equation*}
Then, by inclusion of the class of admissible test functions,
\begin{equation}\label{p52}
\int_{B_1}|\nabla\overline {u}_\theta|^2\le \int_{B_1}|\nabla u_{\theta}|^2,
\end{equation}
where $u_{\theta}$ is the minimiser of $F^*(\theta)$.
Furthermore $(1+\varepsilon)^{-1}\overline{u}_\theta$ is the minimizer $u_{\theta/(1+\varepsilon)}$ for $F^*( \theta/(1 + \varepsilon))$,
since  $0 \le (1+\varepsilon)^{-1}\overline{u}_\theta \le 1$, and
$\fint_{B_1}\frac{\overline{u}_\theta}{1+\varepsilon}=\frac{\theta}{1+\varepsilon}$.
We wish to prove that
\begin{equation}\label{p53}
f(\theta/(1 + \varepsilon)) = \Big(1- \frac{\theta}{ 1 + \varepsilon}\Big) \int_{B_1}\frac{|\nabla \overline{ u}_\theta |^2}{ (1 + \varepsilon)^2} \le
(1-\theta) \int_{B_1}|\nabla u_{\theta}|^2 = f(\theta),
\end{equation}
By \eqref{p52}, inequality \eqref{p53} holds if
\begin{equation*}
\frac{1}{(1+\varepsilon)^2}\bigg(1-\frac{\theta}{1+\varepsilon}\bigg)\le(1-\theta),
\end{equation*}
or, after simplifying,
\begin{equation}\label{p56}
1\le (3+3\varepsilon +\varepsilon^2)(1-\theta).
\end{equation}
The latter inequality clearly holds for any $\varepsilon \ge 0$ if $ \frac{2}{m+2} \le \theta\le \frac23$.
Thus we have proved that $f(\theta)$ is increasing on the interval $[\frac{2}{m+2},\frac23]$.

By the same argument we now prove that $f(\theta) \ge f(2/(m+2))= \frac{ 4m^2}{(m+2)^2}\omega_m $ also for $\theta \in ( \frac23, \theta_m^*]$.
(However, we do not prove that $f$ is increasing on the interval $( \frac23, \theta_m^*]$.)
Given $\theta \in ( \frac23, \theta_m^*]$, we want to find $\varepsilon>0$ so that
\begin{equation}\label{p57}
 \frac{2}{m+2}  \le  \frac{\theta}{1+\varepsilon}\le \frac 23 ,\qquad 1\le \big( 3+3\varepsilon+\varepsilon^2\big) (1-\theta).
\end{equation}
By \eqref{p53} and  \eqref{p56}, one then infers that
$$f(\theta) \ge f(\theta/(1+\varepsilon)) \ge f(2/(m+2)).$$
To find $\varepsilon > 0$ so that  \eqref{p57} holds, we solve
\begin{equation}\label{eq for epsilon}
1= 3(1-\theta)+3\varepsilon(1-\theta)+\varepsilon^2(1-\theta)
\end{equation}
for $\varepsilon$, and verify that for the given range of $\theta,$ one has  $\frac{2}{m+2}  \le  \frac{\theta}{1+\varepsilon}\le \frac 23$.
The positive solution of \eqref{eq for epsilon} is given by
$$\varepsilon=-\frac32+ \frac12\bigg(\frac{1+3\theta}{1-\theta}\bigg)^{1/2}>0.$$
Since $$\theta\le \theta_m^*$$ we have that the requirement $\frac{2}{m+2} \le \frac{\theta}{1+\varepsilon}$ is fulfilled.
Using $\theta \ge \frac 23$, one sees that  $ \frac{\theta}{1+\varepsilon} \le \frac 23$.

\medskip

(b) In this part we assume that $m \ge 4$ and $\theta_m^* \le \theta < 1.$
Let $ 0 \le  r_0 < 1$ and let $u \in H_0^{*1}(B_1)$ with $0 \le u \le 1$ and $\theta = \fint_{B_1} u.$
Since $u$ is radially symmetric and decreasing,
\begin{equation*}
\omega_m^{-1}\int_{B_1\setminus\overline{B}_{r_0}}u \le \, \omega_m^{-1}u(r_0) \, |B_1\setminus\overline{B}_{r_0}|=u(r_0)(1-r_0^m).
\end{equation*}
Since, $0 \le u \le 1$ and $\theta = \fint_{B_1} u,$ we conclude that
\begin{equation*}
 r_0^m\ge \omega_m^{-1}\int_{B_{r_0}}u = \omega_m^{-1}\int_{B_1}u - \omega_m^{-1}\int_{B_1 \setminus  \ov B_{r_0}}u
 \ge \theta-u(r_0)(1-r_0^m).
\end{equation*}
It then follows that
\begin{equation}\label{p31}
u(r_0)\ge  \eta_m(\theta, r_0) := \frac{ \theta-r_0^m}{1-r_0^m}  \ge 0 \, , \qquad  \forall \, r_0\in [0,\theta^{1/m}].
\end{equation}
By inclusion of the admissible test functions one has
\begin{align}\label{p32}
f(\theta)\ge (1-\theta)\inf\big\{\int_{B_1}|\nabla u|^2 : \, u \in H_0^{*1}(B_1), \, u(r_0)\ge \eta_m(\theta, r_0) \big\}.
\end{align}
The infimum in the right-hand side of \eqref{p32} is attained and its minimiser $u^*$ is given by
\begin{equation*}
u^*(r)=
\begin{cases}
\eta_m(\theta, r_0),\hspace {18mm} 0\le r\le r_0,\\
\frac{1-r^{2-m}}{1-r_0^{2-m}}\eta_m(\theta, r_0),\qquad r_0\le r\le 1.
\end{cases}
\end{equation*}
A straightforward calculation gives
\begin{equation}\label{p34}
\int_{B_1}|\nabla u^*|^2=m(m-2)\omega_m\frac{\eta_m^2(\theta, r_0)}{r_0^{2-m}-1}\, .
\end{equation}
We now choose
\begin{equation}\label{p35}
r_0=\theta^{c/m},
\end{equation}
where $c\ge 1$ is to be determined later. This choice satisfies the constraint $0 \le r_0 \le \theta^{1/m}$ in \eqref{p31}. By \eqref{p34} and \eqref{p35},
\begin{equation}\label{p36}
f(\theta)\ge m(m-2)\omega_m\bigg(\frac{\theta-\theta^c}{1-\theta^c}\bigg)^2\frac{1-\theta}{\theta^{c(2-m)/m}-1}.
\end{equation}
By L'H\^opital's rule,
\begin{equation}\label{p371}
\liminf_{\theta\uparrow 1}f(\theta) \,  \ge  \, m^2 \omega_m\frac{(c-1)^2}{c^3}.
\end{equation}
The right-hand side of \eqref{p371} is maximised for $c=3$. This choice yields,
\begin{equation*}
\liminf_{\theta\uparrow 1}f(\theta)\ge \frac{4m^2}{27} \omega_m.
\end{equation*}
Note that
\begin{equation}\label{p40}
\frac{4m^2}{27} \omega_m \ge \frac{4m^2}{(m+2)^2}\omega_m.
\end{equation}
if and only if $m\ge 4$. This is why the proof of \eqref{to be proved}
for $m=2,3$ has been deferred to part (c). By \eqref{p36} we have for $c=3$,
\begin{align}\label{p38}
f(\theta)&\ge m(m-2)\omega_m\bigg(\frac{\theta(1+\theta)}{1+\theta+\theta^2}\bigg)^2\frac{1-\theta}{\theta^{3(2-m)/m}-1}\nonumber \\ &
\ge m(m-2)\omega_m\bigg(\frac{\theta(1+\theta)}{1+\theta+\theta^2}\bigg)^2\frac{1-\theta}{\theta^{-3}-1}\nonumber \\ &
=m(m-2)\omega_m\frac{\theta^5(1+\theta)^2}{(1+\theta+\theta^2)^3}.
\end{align}

To prove \eqref{to be proved} for $\theta\in [\theta_m^*,1)$ and $m \ge 4$,
it suffices, by \eqref{p38}, to show that
\begin{equation}\label{p41}
\frac{\theta^5(1+\theta)^2}{(1+\theta+\theta^2)^3}\ge \frac{4m}{(m-2)(m+2)^2},\qquad \forall \, \theta\in [\theta_m^*,1).
\end{equation}
First observe that the left-hand side of \eqref{p41} is a product of non-negative increasing functions, $\theta\mapsto\frac{\theta(1+\theta)}{1+\theta+\theta^2}$ and
$\theta\mapsto\frac{\theta^3}{1+\theta+\theta^2}$, and so is increasing. So if \eqref{p41} holds for $\theta=\theta_m^*$ then it holds on the interval $[\theta_m^*,1)$.
Furthermore by \eqref{p27},
\begin{equation*}
\theta_m^*=1-\frac{8}{(m+2)(m+4+(m^2+ 8m)^{1/2})}.
\end{equation*}
Hence $(\theta_m^*)_m$ is a strictly increasing sequence. Since the right-hand side of \eqref{p41} is decreasing in $m$ we conclude that if \eqref{p41} holds for $m=m_1$ then it holds for all $m\ge m_1$.
It is straightforward to verify
\begin{equation*}
\theta_6^*> \frac{15}{16},
\end{equation*}
and that \eqref{p41} holds for $\theta=\frac{15}{16}$, and $m=6$. Hence it follows that \eqref{p41} holds for all  $\theta \in [\theta_m^*, 1)$ with $m\ge 6$.

To complete the proof of part (b) it remains to treat the cases $m=4$ and $m=5$. We first consider the case $m=4$. One computes
\begin{equation*}
\theta_4^*=\frac{1+\sqrt 3}{3},
\end{equation*}
and by the first inequality in \eqref{p38}, one gets
\begin{equation}\label{p45}
f(\theta)\ge 8\omega_4\bigg(\frac{\theta(1+\theta)}{1+\theta+\theta^2}\bigg)^2\bigg(\frac{1-\theta}{\theta^{-3/2}-1}\bigg).
\end{equation}
Since both $\theta\mapsto \frac{\theta(1+\theta)}{1+\theta+\theta^2}$ and $\theta\mapsto \frac{1-\theta}{\theta^{-3/2}-1}$ are non-negative increasing functions on the interval $[0, 1)$
the right-hand side of \eqref{p45} is increasing in $\theta$. Note that the right-hand side of \eqref{p40} equals $16\omega_4/9$. Hence  by \eqref{p45} it suffices to verify that
\begin{equation}\label{p46}
\bigg(\frac{\theta^*_4(1+\theta^*_4)}{1+\theta^*_4+\theta{^*_4}^2}\bigg)^2\bigg(\frac{1-\theta^*_4}{\theta{^*_4}^{-3/2}-1}\bigg)\ge \frac{2}{9}.
\end{equation}
Numerical evaluation of the left-hand side of \eqref{p46} yields
\begin{equation*}
\bigg(\frac{\theta^*_4(1+\theta^*_4)}{1+\theta^*_4+\theta{^*_4}^2}\bigg)^2\bigg(\frac{1-\theta^*_4}{\theta{^*_4}^{-3/2}-1}\bigg)\ge .238,
\end{equation*}
which implies \eqref{p46}.

Finally we consider the case $m=5$. One computes that
\begin{equation}\label{p48}
\theta_5^*\ge \frac{13}{14},
\end{equation}
and by the first inequality in \eqref{p38},
\begin{equation*}
f(\theta)\ge 15\omega_5\bigg(\frac{\theta(1+\theta)}{1+\theta+\theta^2}\bigg)^2\frac{1-\theta}{\theta^{-9/5}-1}.
\end{equation*}
Since both $\theta\mapsto \frac{\theta(1+\theta)}{1+\theta+\theta^2}$ and $\theta\mapsto \frac{1-\theta}{\theta^{-9/5}-1}$ are non-negative increasing functions
on the interval $[0,1)$, so is
the right-hand side of \eqref{p45} and it remains, by \eqref{p45}, to verify that
\begin{equation}\label{p49}
\bigg(\frac{\theta^*_5(1+\theta^*_5)}{1+\theta^*_5+\theta{^*_5}^2}\bigg)^2\bigg(\frac{1-\theta^*_5}{\theta{^*_5}^{-9/5}-1}\bigg)\ge \frac{20}{147}.
\end{equation}
By \eqref{p48}, the left-hand side of \eqref{p49} is bounded from below by $.206$ while the right hand side of \eqref{p49} is bounded from above by $.137$. This completes the proof of part (b).

\medskip

(c) In this part we treat the cases $m=2$ and $m=3$. We begin with some preliminary considerations.
We note that the minimisation problem \eqref{minimization 2} is related to a volume constraint obstacle problem in $B_1$:
we claim that there exist $c>0$ and $0 \le l < 1$, depending on $\theta$, so that
$u_\theta$ satisfies the following system of equations,
\begin{equation}\label{np1}
\begin{cases}
-\Delta u=c,&\text{in }B_1\setminus \ov B_l,\\
u=1,&\text{on } \ov B_l,\\
u= 0,&\text{on }\partial B_1,\\
\frac {\partial u}{\partial \nu} =0, &\text{on }\partial B_l,
\end{cases}
\end{equation}
where $\nu$ denotes the inward pointing normal on the sphere $\partial B_l$.
Indeed, since $u_\theta$ is radially symmetric, decreasing and since $u_\theta(0) = 1$ (see \eqref{minimization 2}) and $u_\theta(1) = 0$,
there exists  a maximal number $0 \le l \equiv l(\theta) < 1$ so that $u_\theta (r) =1$ for $0 \le r \le l$.
By the Lagrange multiplier theorem, there exists a constant $c >0$ so that $-\Delta u_\theta  =c$ on $B_1 \sm \ov B_l$ in the sense of distributions.
It then follows from \cite[Theorem 2]{GE} that $u_\theta$ is $C^{1, \alpha}$ on $\ov B_1$ which implies that $\frac {\partial u_\theta}{\partial \nu} =0 $ on $\partial B_l$.
These observations establish \eqref{np1}.
Note that both $c$ and $l$ are uniquely determined by $\theta$.

We claim that the map
$$
\mathfrak{b} :  [\frac{2}{m+2}, 1) \to  [0,1) , \,    \theta \mapsto l(\theta) \, ,
$$
is an increasing bijection.
To prove the latter assertion, we construct for any given $0 \le l < 1$ a unique
radially symmetric, decreasing solution $u(\cdot ; l)$ of \eqref{np1} and show that $\theta \equiv \theta(l) := \fint_{B_1} u(\cdot; l)$,
satisfies $2/(m+2) \le \theta <1$ with $\theta(0) = 2/(m+2)$ and $\lim_{l \to 1} \theta(l)= 1$.
First we note that $c$ is uniquely determined by $l$ since
for any given $0 \le l < 1$, the solution of  \eqref{np1} is given by a formula. To obtain it, note that the general radially symmetric solution of
$- \Delta u = c$ on the annulus $B_1 \setminus \ov B_l$ is of the form
$$
u(r)=
\begin{cases}
-c\frac{r^2}{4}+a \ln(r) +b \qquad \quad  \mbox{ if } m=2 \, ,\\
-c \frac{r^2}{2m} -\frac{a}{(m-2)r^{m-2}} +b \quad \mbox{ if } m\ge3
\end{cases}
$$
for some real constants $a,$ $b,$ $c$.
The condition $\frac {\partial u}{\partial \nu}(l) =0 $ implies that $a= \frac{l^m c}{m}$
so that the boundary condition $u(1)=0$ leads to
\begin{equation}\label{np2}
u(r)=
\begin{cases}
\frac{c}{4}( 1 - r^2) + \frac{l^2 c}{2}  \ln(r)  \qquad \qquad \quad \quad \mbox{ if } m=2,\\
 \frac{c}{2m}(1-r^2) + \frac{l^m c}{m(m-2)} (1-\frac{1}{r^{m-2}} ) \ \   \mbox{ if } m\ge3.
\end{cases}
\end{equation}
The value of $c$ is now obtained by the requirement $u(l)=1$,
\begin{equation}\label{np3}
c=
\begin{cases}
 \big(\frac{1-l^2}{4}+ \frac{l^2}{2}\ln(l)\big)^{-1} \qquad \quad \quad  \mbox{ if } m=2,\\
\big(\frac{1}{2m}+ \frac{l^m}{m(m-2)}-\frac{l^2}{2(m-2)}\big)^{-1} \quad \mbox{ if } m\ge3.
\end{cases}
\end{equation}
One verifies in a straightforward way that the resulting function $u \equiv u(\cdot; l)$ is decreasing for $l \le r \le 1$,
that $\theta(0) = 2/(m+2),$
and that $l \mapsto c \equiv c(l)$ is a continuous, strictly increasing function of $0 \le l < 1$.
We claim that $l \mapsto \theta(l)$ is also strictly increasing.
To verify that this is indeed the case, one could explicitly compute $\theta$ in terms of $l$, but the formula is rather complicated.
Instead we prove the claim by using the maximum principle. By contradiction, suppose there exist $0 \le l_2 < l_1 < 1$ with  $ \theta_2:=\theta(l_2) > \theta_1:= \theta(l_1)$.
By the considerations above,  $ c_2:= c(l_2) < c_1:= c(l_1).$ Hence $-\Delta(u_1 - u_2) = c_1 - c_2 > 0$ on $B_1 \setminus \ov B_{l_1}$
where $u_j:= u(\cdot, l_j)$ for $j=1,2$.
Since $\theta_1 <\theta_2$,
there exist $l_1 < r_1 < r_2 <1$ so that $(u_1 - u_2)(r) < 0$ for any $r_1 < r < r_2$,
contradicting the maximum principle.

From the formula \eqref{np2} of $u(\cdot; l)$ one infers that $\theta(l)$ is a continuous function of $l$
and that $\lim_{l \uparrow 1} \theta(l)= 1$. Hence for any $0 \le l < 1$, $u(\cdot; l)$ coincides with $u_\theta$
where $\theta \equiv \theta(l) = \fint_{B_1}  u(\cdot; l)$.
Altogether we have shown that $ \mathfrak{b}$ is a continuous, increasing bijection.

Define  $g : [0, 1) \to \R$ by
\begin{equation}\label{formula g}
g(l) :=f(\theta(l)) = (1 - \theta(l)) \int_{B_1} | \nabla u_{\theta(l)}|^2 \, , \qquad \theta(l) := \mathfrak{b}^{-1}(l) \, .
\end{equation}
In  view of \eqref{minimum} it then suffices to show that $g$ is increasing on $[0, 1)$.

 We first consider the case $m=2$. Integrating by parts, one obtains from \eqref{np1}
 $$
 \int _{B_1} |\nabla u_{\theta(l)}|^2 = c(l) 2\pi \int_l^1u_{\theta(l)}(r) r dr,
 $$
 and
 $$\theta(l)= l^2+ 2 \int_l^1u_{\theta(l)}(r) r dr.$$
Using \eqref{np2} and   \eqref{np3}, one infers from \eqref{formula g} that
 $$
 g(l)= 2\pi \frac{(\frac{1}{16}-\frac{l^2}{4}+ \frac{3l^4}{16}-\frac{l^4 \ln (l)}{4})(\frac{1}{8}-\frac{l^4}{8}+\frac{l^2 \ln(l)}{2} )}{(\frac 14-\frac{l^2}4+\frac{l^2}{2}\ln (l))^3},
 $$
 and a straightforward computation yields
 $$
 g'(l)= \pi  \frac{l(l^2-1)(-l^2+l^2\ln(l)+\ln(l)+1)(-5l^4+4l^4\ln(l)+4l^2+8l^2\ln(l)+1)}{(1-l^2+2l^2\ln(l))^4}.
 $$
By inspection one verifies that $g'(l) >0$ on $(0,1)$.

Without any additional effort we may consider the general case $m\ge 3$, and follow the line of arguments above. Integrating by parts, one has
  $$
  \int_{B_1} |\nabla u_{\theta(l)}|^2  = c(l) m \omega_m \int_l^1u_{\theta(l)}(r) r^{m-1} dr
  $$
  and one computes that
$$
\theta(l) = l^m+ m \int_l^1u_{\theta(l)}(r) r^{m-1} dr.
$$
Using formula \eqref{np2} for $m \ge 3$, one infers
$$\int_l^1u_{\theta(l)}(r)r^{m-1} dr= c(l) \, \Big (\frac{l^m(l^2-1)}{2m(m-2)} +\frac {l^{m+2}-1}{2m(m+2)}+ \frac{l^m(1-l^m)}{m^2(m-2)}+\frac{1-l^m}{2m^2}\Big)$$
so that by \eqref{formula g}
\begin{equation}\label{np4}
g(l)= c(l) m\omega_m \Big (1-l^m-m\int_l^1u_{\theta(l)}(r) r^{m-1} dr\Big)\int_l^1u_{\theta(l)}(r) r^{m-1} dr.
\end{equation}

In the case $m=3$, one gets in this way
$$g(l)=\frac{24\pi}{25}\frac{(5(1-l^3)+10l^3(1-l^3)-15l^3(1-l^2)-3(1-l^5))(1-l^5+5(l^3-l^2))}{(2l^3+1-3l^2)^3},$$
and a lengthy computation leads to the formula
$$g'(l)=\frac{24\pi}{25} \frac{2 l (20 l^4 + 67 l^3 + 84 l^2 + 46 l + 8)}{(2 l + 1)^4}.$$
Clearly,  $g'(l) >0$ on $(0,1)$ for $m=3$.\footnote{For general $m \ge 4$, the formula for $g'$ can be computed to be a quotient of two polynomials with degrees depending on $m$. We believe that $g'(l)$ is strictly positive for every $l$ on $(0,1)$, but a direct proof, covering all dimensions $m\ge 5$, based on the formula of $g'$ seems out of reach. For $m=4$ the quotient of the polynomials simplifies, and gives $g(l)=\omega_4\Big(\frac {16}{3}l^2+\frac{16}{9}\Big ).$ We see that for $m=4$,  $g(1)=\frac{64\omega_4}{9}$ agrees with the value $f(1)$ given in Remark \ref{rem1}.  We also have that $g(0)=\frac{16\omega_4}{3}$ agrees with the value $f(1/3)$ from Theorem \ref{the9}(i). Indeed for $m=4$ and $\theta=\frac13$ we have equality in \eqref{p22}.  Note that for $m=4$, $g(l)$ is increasing on $(0,1)$.}

\medskip

(d) In this last part we prove that equality in \eqref{p22} holds if and only if $\Omega$ is a ball and $u$ is a multiple of the torsion function for that ball.
Clearly, if  $\Omega$ is a ball and $u$ is a multiple of the torsion function for that ball, then  \eqref{p22} holds (see \eqref{minimum}).
Conversely, assume that equality holds  in \eqref{p22}. We re-scale the measure of $\Omega$ and the $L^{\infty}$ norm of $u$ as in the proof of Theorem \ref{the9}(i).
Equality in \eqref{p22} implies that $u$ has the same Dirichlet integral as its Schwarz rearrangement $u^*$,
\begin{equation}\label{Dirichlet}
\int_\Om|\nabla u|^2 = \int_{B_1}|\nabla u^*|^2 \, ,
\end{equation}
 and its Schwarz rearrangement is the solution
of the obstacle problem on the ball $B_1$ -- see \eqref{np1}. In view of the strict monotonicity of $f$ on $[\frac{2}{m+2}, \frac 23)$ (see part (a))
and the (strict) inequalities obtained above $\frac 23$ (see parts (a)-(c)), this implies that $\theta =\frac{2}{m+2}$, which corresponds to $l=0$ and to $c=2m$
(see \eqref{minimum}, \eqref{np3}).
It means that $u^*$ is a multiple of the torsion function on $B_1$ (see \eqref{np1}).

 In order to justify that $u$ has to be equal to $u^*$, recall that \eqref{Dirichlet} holds
  and that $u^*$, being a multiple of the torsion function on $B_1$, has a critical set of zero measure.
  Equality between $u$ and $u^*$, up to a translation, comes from the classical result of Brothers and Ziemer \cite[Theorem 1.1]{BZ88}.
  \hfill $\square$

 \medskip

\noindent
{\em Proof of Theorem \textup{\ref{the9}(ii).}}
The key ingredient into the proof is inequality \eqref{p22}.
First note that since for any $t > 0$, $\frac{1}{t^2}v_{\Omega}(tx)$ is the torsion function of $\frac{1}{t}\Omega$. Choosing
$t = (|\Omega| /\omega_m)^{1/m}$, one infers that
it suffices to prove estimate \eqref{p22a} in the case $|\Omega| = \omega_m$.

We apply (i) to
\begin{equation*}
u(x)=\frac{v_{\Omega}(x)}{\|v_{\Omega}\|_{\infty}},\qquad x\in \Omega.
\end{equation*}
 Observe that $\fint_{\Omega} u = \Phi(\Omega).$
First we consider the case where $\fint_{\Omega} u\ge \frac{2}{m+2}$. Then by \eqref{p25}
\begin{equation*}
\frac{1}{\|v_{\Omega}\|_{\infty}^2}\big(1-\Phi(\Omega)\big)\int_{\Omega}|\nabla v_{\Omega}|^2\ge \frac{4m^2}{(m+2)^2}\omega_m\, .
\end{equation*}
Since $- \Delta v_\Omega = 1,$
\begin{equation*}
\frac{1}{\omega_m\|v_{\Omega}\|_{\infty}}\int_{\Omega} | \nabla v_{\Omega}|^2 =
\frac{1}{\omega_m\|v_{\Omega}\|_{\infty}}\int_{\Omega} v_{\Omega}=\Phi(\Omega).
\end{equation*}
Since $\Phi(\Omega) \le 1$, we find that
\begin{align*}
\|v_{\Omega}\|_{\infty}&\le \frac{(m+2)^2}{4m^2}\Phi(\Omega)\big(1-\Phi(\Omega)\big)\nonumber \\
& \le \frac{(m+2)^2}{4m^2}\big(1-\Phi(\Omega)\big),
\end{align*}
which gives \eqref{p22a}. Next consider the case $\fint_{\Omega} u\le \frac{2}{m+2}$.
Since by the de Saint-Venant's principle $\|v_\Omega\|_\infty \le \|v_{B_1}\|_\infty$ and since $ \|v_{B_1}\|_\infty = 1/2m$
 we find that
\begin{align*}
1-\Phi(\Omega)&\ge \frac{m}{m+2}=\frac{2m^2}{m+2}.\frac{1}{2m}\nonumber \\ &
\ge \frac{2m^2}{m+2}\|v_{\Omega}\|_{\infty}.
\end{align*}
Note that $\frac{2m^2}{m+2} \ge \frac{4m^2}{(m+2)^2}$ and hence the estimate \eqref{p22a} also holds in this case.\\
Inequality \eqref{p22b} follows from \eqref{e4} and \eqref{p22a}.
\hfill $\square$

\medskip

\noindent
{\em  Proof of Theorem \textup{\ref{the9}(iii).}}
The key ingredient in the proof is inequality \eqref{p22}.
Since $\Omega$ is connected, $\lambda_1(\Omega)$ has multiplicity $1$ and hence both, $\varphi_{1,\Omega}$ and $E(\Omega)$,
are well defined.
First note that since for any $t > 0$, $t^{m/2}\varphi_{1, \Omega}(tx)$ is the positive $L^2$-normalised Dirichlet eigenfunction
of $\frac{1}{t}\Omega$, choosing
$t = (|\Omega| /\omega_m)^{1/m}$, one infers that
it suffices to prove  estimate \eqref{p22c} in the case $|\Omega| = \omega_m$.
By (26) in \cite{vdB}, one has
\begin{equation*}
\|\varphi_{1,\Omega}\|_{\infty}\le \bigg(\frac{e}{2\pi m}\bigg)^{m/4}\lambda_1(\Omega)^{m/4}.
\end{equation*}
Hence $\varphi_{1,\Omega}\in L^{\infty}(\Omega)$.
We apply \eqref{p22} to
\begin{equation*}
u(x)=\frac{\varphi_{1,\Omega}(x)}{\|\varphi_{1,\Omega}\|_{\infty}},\qquad x\in \Omega.
\end{equation*}
First we consider the case where $\fint_{\Omega} u\ge \frac{2}{m+2}$. Then by \eqref{p25}
\begin{equation*}
\big(1-\fint_{\Omega}u\big) \int_{\Omega}|\nabla u|^2 \ge \frac{4m^2}{(m+2)^2}\omega_m .
\end{equation*}
Since $\int_{\Omega}|\nabla u|^2=\lambda_1(\Omega)\int_{\Omega}u^2\le \lambda_1(\Omega)\omega_m$, and $\fint_{\Omega} u=E(\Omega)$ we obtain,
\begin{equation}\label{p67}
\lambda_1(\Omega)\ge \frac{4m^2}{(m+2)^2}\big(1-E(\Omega)\big)^{-1}.
\end{equation}
Now let us consider the case where $\fint_{\Omega} u\le \frac{2}{m+2}$. Then $1-E(\Omega) \ge \frac{m}{m+2}$, and hence by Faber-Krahn,
\begin{equation}\label{p68}
\lambda_1(\Omega)\ge \lambda_1(B_1) \frac{m}{m+2}\big(1-E(\Omega)\big)^{-1}.
\end{equation}
Combining \eqref{p67} and \eqref{p68} gives
\begin{equation}\label{p69}
\lambda_1(\Omega)\ge \min\bigg\{\frac{4m^2}{(m+2)^2},\frac{m}{m+2}\lambda_1(B_1)\bigg\}\big(1-E(\Omega)\big)^{-1}.
\end{equation}
To finish the proof we recall that
\begin{equation}\label{p70}
\lambda_1(B_1)=j^2_{(m-2)/2}.
\end{equation}
By the results of \cite{LL}, we have that
\begin{equation}\label{p71}
j^2_{(m-2)/2}\ge \frac{m(m+8)}{4}.
\end{equation}
Hence by \eqref{p69}, \eqref{p70}, \eqref{p71},
\begin{equation*}
\lambda_1(\Omega)\ge \frac{4m^2}{(m+2)^2}\big(1-E(\Omega)\big)^{-1} ,
\end{equation*}
which is inequality \eqref{p22c} in the case $|\Omega | = \omega_m$.
\hfill $\square$

\begin{remark}\label{rem1}
By an elementary computation, using the expression for $g$ in terms of $l$ from \eqref{np4}, one can show that
$ \lim_{\theta \uparrow 1} f(\theta) = \frac{4}{9}m^2 \omega_m,\,m\ge 2.$
\end{remark}

\medskip

Below we show that $\lambda_1(\Omega)$ cannot be bounded from above in terms of  $(1-E(\Omega))^{-1}|\Omega|^{-2/m}$ nor of $(1-\Phi(\Omega))^{-1}|\Omega|^{-2/m}$.
\begin{remark}\label{rem2} We have
\begin{equation}\label{ea1}
\sup\{\lambda_1(\Omega)(1-E(\Omega))|\Omega|^{2/m}:\Omega\,\textup{open, convex},\,0<|\Omega|<\infty\}=\infty,
\end{equation}
and
\begin{equation}\label{ea2}
\sup\{\lambda_1(\Omega)(1-\Phi(\Omega))|\Omega|^{2/m}:\Omega\,\textup{open, convex},\,0<|\Omega|<\infty\}=\infty.
\end{equation}
\end{remark}
\begin{proof}
To prove \eqref{ea1} we let $\Omega_n=(0,1)^{m-1}\times(0,n)$. Then $\lambda_1(\Omega_n)\ge (m-1)\pi^2.$ A straightforward calculation shows that for an interval of length $L,\, L>0,\,
E((0,L))=\frac{2}{\pi}$. By separation of variables $E(\Omega_n)=\frac{2^m}{\pi^m}$. We conclude that the supremum in \eqref{ea1} is bounded from below by $(m-1)\big(1-\frac{2^m}{\pi^m}\big)\pi^2n^{2/m}.$
Letting $n\rightarrow\infty$ concludes the proof.

To prove \eqref{ea2} we use \cite[Theorem 1.1 (i)]{DPGGLB} for $p=q=2$ to see that $\Phi(\Om_n)\le \frac23$. We conclude that the supremum in \eqref{ea2} is bounded from below by $\frac13(m-1)\pi^2n^{2/m}$. Letting $n\rightarrow\infty$ concludes the proof.
\end{proof}

\section{Proof of Theorem \ref{the4} \label{sec6}}

We start with the following observation.
\begin{lemma}\label{lem3}
Let $(\Omega_n)$ be a sequence of open sets in $\R^m$ with $0<|\Omega_n|<\infty,\,n\in \N$, $1\le p<\infty$, and let $f_n\in L^p(\Omega_n),\,n\in \N$ be a sequence of non-negative functions with $0<\|f_n\|_{\infty}<\infty$.  If
$(f_n)$ either localises in $L^p$ or $\kappa$-localises in $L^p$ then $(f_n)$ has vanishing mean to max ratio.
\end{lemma}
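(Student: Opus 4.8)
The plan is to turn the (localisation or $\kappa$-localisation) hypothesis into a single admissible sequence of subsets carrying, in the limit, a fixed positive fraction of the $L^p$-mass, and then to combine the trivial pointwise bound $f_n\le\|f_n\|_\infty$ with one application of H\"older's inequality on the finite-measure set $\Omega_n$. Write $M_n=\|f_n\|_\infty\in(0,\infty)$. By the reformulation \eqref{ea12} in the case where $(f_n)$ localises, respectively by the maximising sequence furnished by Cantor's diagonalisation (see the discussion following Definition \ref{def2}) in the case where $(f_n)$ $\kappa$-localises, there are a sequence $(A_n)\in\mathfrak A((\Omega_n))$ and a constant $\kappa>0$ with
\[
\frac{|A_n|}{|\Omega_n|}\longrightarrow 0,\qquad \kappa_n:=\frac{\|f_n\mathbf 1_{A_n}\|_p^p}{\|f_n\|_p^p}\longrightarrow\kappa .
\]
In particular $\kappa_n\ge\kappa/2$ for all $n$ large enough, so the quotients below are well defined.

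The core estimate then runs as follows. Since $0\le f_n\le M_n$ almost everywhere, integrating $f_n^p\le M_n^p$ over $A_n$ gives
\[
\kappa_n\,\|f_n\|_p^p=\|f_n\mathbf 1_{A_n}\|_p^p\le M_n^p\,|A_n|,
\]
hence $\|f_n\|_p^p\le \kappa_n^{-1}M_n^p|A_n|$ for $n$ large. Because $|\Omega_n|<\infty$, H\"older's inequality gives $\|f_n\|_1\le|\Omega_n|^{1-1/p}\|f_n\|_p$, valid for every $p\in[1,\infty)$ (an equality when $p=1$). Inserting the previous bound,
\[
\frac{\|f_n\|_1}{|\Omega_n|\,M_n}\;\le\;\frac{|\Omega_n|^{1-1/p}\|f_n\|_p}{|\Omega_n|\,M_n}\;\le\;\frac{1}{\kappa_n^{1/p}}\Big(\frac{|A_n|}{|\Omega_n|}\Big)^{1/p}\;\longrightarrow\;0 ,
\]
using $|A_n|/|\Omega_n|\to0$ and $\kappa_n\to\kappa>0$. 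This is exactly the assertion that $(f_n)$ has vanishing mean to max ratio. (For $p=1$ the chain collapses to $\|f_n\|_1/(|\Omega_n|M_n)\le |A_n|/(\kappa_n|\Omega_n|)$, which already covers the application to Theorem \ref{the4}.)

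The only point that needs care — and the sole genuine obstacle — is the first step: one must produce a \emph{single} admissible sequence $(A_n)$ along which the mass fraction $\kappa_n$ converges to $\kappa>0$, rather than merely realising $\kappa$ as a $\limsup$. In the localisation case this is precisely the content of \eqref{ea12}. In the $\kappa$-localisation case it is the sequence that "asymptotically supports a fraction $\kappa$ of $\|f_n\|_p^p$" in the discussion after Definition \ref{def2}; should one only have $\limsup_n\kappa_n=\kappa$ from the maximiser in \eqref{e12}, one first argues along a subsequence on which $\kappa_n\to\kappa$, observing that the restriction of $(A_n)$ to that subsequence is again admissible, and that every subsequence of $(f_n)$ inheriting a positive mass fraction yields the conclusion there, whence $\|f_n\|_1/(|\Omega_n|M_n)\to0$ for the full sequence. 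Beyond this bookkeeping the argument uses nothing more than $f_n\le\|f_n\|_\infty$, one H\"older estimate on $\Omega_n$, and the definition of $\kappa_n$.
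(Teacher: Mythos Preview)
Your argument is correct and is essentially the paper's proof: use $f_n\le\|f_n\|_\infty$ on $A_n$ to get $\|f_n\|_p^p\le\kappa_n^{-1}\|f_n\|_\infty^p\,|A_n|$, then apply H\"older on $\Omega_n$ to bound the mean-to-max ratio by $\big(\kappa_n^{-1}|A_n|/|\Omega_n|\big)^{1/p}$. The paper phrases the same chain with an auxiliary $\varepsilon$ in place of your $\kappa_n$, and, like you, takes for granted the existence of an admissible sequence $(A_n)$ along which the mass fraction has a positive limit; your closing paragraph makes this point more explicit than the paper does.
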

\begin{proof} Let $\varepsilon\in(0,1)$ be arbitrary. By hypothesis there exists a sequence $(A_n)$ satisfying \eqref{ea12}. Then for all $n$ sufficiently large $|A_n|/|\Omega_n|<\varepsilon$,
and $\|f_n\|^p_p\le \kappa^{-1}(1-\varepsilon)^{-1}\int_{A_n}f^p_n$. Then for all such $n$,
\begin{equation}\label{e60}
\|f_n\|^p_p\le \kappa^{-1}(1-\varepsilon)^{-1}\int_{A_n}f^p_n\le \kappa^{-1}(1-\varepsilon)^{-1}\|f_n\|^p_{\infty}|A_n|\le \frac{\varepsilon\kappa^{-1}}{1-\varepsilon}\|f_n\|^p_{\infty}|\Om_n|.
\end{equation}
By H\"older's inequality,
\begin{equation}\label{e60b}
\bigg(\int_{\Om_n}f_n\bigg)^p\le \|f_n\|_p^p |\Om_n|^{p-1}.
\end{equation}
By \eqref{e60} and \eqref{e60b} we have for all $n$ sufficiently large,
\begin{equation*}
\frac{\|f_n\|_1}{|\Om_n|\|f_n\|_{\infty}}\le \bigg(\frac{\varepsilon\kappa^{-1}}{1-\varepsilon}\bigg)^{1/p}.
\end{equation*}
Since $\varepsilon\in(0,1)$ was arbitrary, $(f_n)$ has vanishing mean to max ratio.
\end{proof}

\noindent{\it Proof of Theorem} \ref{the4}. We obtain by
 \eqref{e24} and \eqref{e41}
\begin{equation*}
v_{\Omega,V}(x)=\sum_{j =1}^{\infty} \lambda_j(\Omega,V)^{-1} \bigg(\int_{\Omega}\varphi_{j, \Omega,V}\bigg)\varphi_{j, \Omega,V}(x).
\end{equation*}
Integrating with respect to $x$ over $\Omega$ gives
\begin{align*}
\int_{\Omega}v_{\Omega,V}&=\sum_{j =1}^{\infty} \lambda_j(\Omega,V)^{-1} \bigg(\int_{\Omega}\varphi_{j, \Omega,V}\bigg)^2\nonumber \\ &
\ge \lambda_1(\Omega,V)^{-1} \bigg(\int_{\Omega}\varphi_{1, \Omega,V}\bigg)^2.
\end{align*}
Multiplying both sides with $\lambda_1(\Omega,V)$, and using the definition of $\mathfrak{d}_m$ in \eqref{e29a} gives
\begin{equation*}
\mathfrak{d}_m\frac{\|v_{\Omega,V}\|_1}{\|v_{\Omega,V}\|_{\infty}}\ge \bigg(\int_{\Omega}\varphi_{1, \Omega,V}\bigg)^2.
\end{equation*}
This implies that
\begin{equation}\label{e68}
\mathfrak{d}_m\Phi(\Omega,V)\ge \frac{1}{|\Omega|}\bigg(\int_{\Omega}\varphi_{1, \Omega,V}\bigg)^2.
\end{equation}
Suppose $(v_{\Om_n,V_n})$ either localises or $\kappa$- localises in $L^1$. By Lemma \ref{lem3} for $p=1$,  $\lim_{n\rightarrow\infty}\Phi(\Omega_n,V_n)=0$. By \eqref{e68}, $\lim_{n\rightarrow\infty}\frac{1}{|\Omega_n|}\big(\int_{\Omega_n}\varphi_{1, \Omega_n,V_n}\big)^2=0$.
This implies localisation of $(\varphi_{1, \Omega_n,V_n})$ in $L^2$ by Lemma 3 in \cite{vdBBDPG}, and vanishing efficiency by Lemma \ref{lem3} for $p=2$.
\hfill  $\square$

Theorem \ref{the4} implies that if $(\Omega_n,V_n)$ satisfies the $\eta$ condition of Theorem \ref{the1}(ii), and if either $(v_{\Omega_n})$ or $(v_{\Omega_n,V_n})$ have non-vanishing efficiencies then both $(v_{\Omega_n})$ and $(v_{\Omega_n,V_n})$ are not localising.

\bigskip

\noindent
{\bf Acknowledgments.} MvdB and TK acknowledge support by the Leverhulme Trust through Emeritus Fellowship EM-2018-011-9,
and the Swiss National Science Foundation respectively.  DB was supported by the LabEx PER\-SYVAL-Lab GeoSpec (ANR-11-LABX-0025-01) and ANR SHAPO (ANR-18-CE40-0013).

\end{document}